\documentclass[11pt]{article}
\usepackage{subcaption}
\usepackage{authblk}
\usepackage{hyperref}
\usepackage{forest, float}

\usepackage{amssymb,amsthm,amsmath,mathrsfs,fullpage,multirow}
\usepackage{tikz}
\usetikzlibrary{matrix,decorations.pathreplacing}
\usepackage{ytableau}
\usetikzlibrary{arrows.meta}

\tikzset{
  solid node/.style={circle,draw,inner sep=1.2,fill=black},
  hollow node/.style={circle,draw,inner sep=1.2},
  left label/.style={above left,midway},
  right label/.style={above right,midway}
}

\makeatletter
\def\keywords{\xdef\@thefnmark{}\@footnotetext}
\makeatother

\DeclareMathOperator{\des}{des}

\renewcommand\S{\mathcal S}

\newcommand\C{\mathcal C}

\renewcommand\O{\mathcal O}

\newtheorem{theorem}{Theorem}[section]
\newtheorem{lemma}[theorem]{Lemma}
\newtheorem{proposition}[theorem]{Proposition}
\newtheorem{corollary}[theorem]{Corollary}
\newtheorem{conjecture}[theorem]{Conjecture}
\theoremstyle{definition}
\newtheorem{definition}[theorem]{Definition}
  \newtheorem{question}[theorem]{Question}
\newtheorem{remark}[theorem]{Remark}
\newtheorem{example}[theorem]{Example}

\newcommand{\thistheoremname}{}
\newtheorem*{genericthm*}{\thistheoremname}
\newenvironment{namedthm*}[1]
  {\renewcommand{\thistheoremname}{#1}%
   \begin{genericthm*}}
  {\end{genericthm*}}

\usepackage{enumerate}

\title{Pattern avoidance in canon permutations}
\author[ ]{Robert Laudone}
\affil[ ]{{\small Department of Mathematics, United States Naval Academy, Annapolis, MD, 21402}}
\affil[ ]{{\small Email: laudone@usna.edu }}

\date{}

\begin{document}

\keywords{2020 \emph{Mathematics Subject Classification.} Primary 05A05}%
\keywords{\emph{Keywords:} Pattern avoidance; canon permutation; lattice word}%

\maketitle

\begin{abstract}
    A canon permutation is a $k$-regular word over $[n]$ in which, for each $j$, the $j$-th copies of the letters form the same permutation $\sigma$. These were introduced by Elizalde as a generalization of nonnesting multipermutations, which are the case $k = 2$. We study classical pattern avoidance in them for arbitrary $k$. We show that avoiding any one of $112$, $122$, $211$ or $221$ is counted by the $k$-Catalan numbers $\frac{1}{n}\binom{kn}{n-1}$. We enumerate the classes obtained by forbidding one of these together with any $\tau \in \mathcal{S}_3$, and we give a bijection with $k$-ary trees that we use to generalize a theorem of Gabriel, Peske, Pudwell and Tay. We then show that avoiding a set of patterns closed under relabeling reduces, up to a factor of $n!$, to avoidance in $k$-regular lattice words. We use this to enumerate the canon permutations avoiding some natural generalizations of the nonnesting and noncrossing patterns, as well as the family $\{1^a21^b, 2^a12^b\}$. We close with several conjectures and questions.
\end{abstract}

\section{Introduction}

Let $\S_n$ be the set of permutations of $[n] = \{1,\dots,n\}$ and let $\mathcal{M}^k_n = \{1^k,\dots,n^k\}$ be the multiset with $k$ copies of each number in $[n]$. We denote by $\S_n^k$ the permutations of $\mathcal{M}_n^k$; these are also sometimes called $k$-regular words over $[n]$. By a \emph{pattern} we mean any word $\sigma = \sigma_1\cdots\sigma_m$ over the alphabet $[m]$ in which each of $1,\dots,m$ occurs at least once, and we write $\mathcal{P}$ for the set of all patterns. Note that a pattern need not be regular, for example $112 \in \mathcal{P}$. We say that a permutation $\pi = \pi_1\ldots\pi_{nk} \in \S_n^k$ contains a pattern $\sigma = \sigma_1 \dots \sigma_m$ if there are indices $1 \leq i_1 < \cdots < i_m \leq nk$ where $\pi_{i_1}\pi_{i_2}\ldots\pi_{i_m}$ has the same relative order as $\sigma$. For example, if $\sigma = 112$, then $3{\bf 22} 1{\bf3}1$ contains $112$ in the bold positions. We say $\pi$ \emph{avoids} $\sigma$ if it does not contain it. We denote by $\S_n^k(\sigma)$ the multipermutations that avoid $\sigma$.

The case $k=2$ has been extensively studied. This began with Stirling permutations $\S_n^2(212)$, introduced by Gessel and Stanley \cite{GS} in 1978, whose descent polynomials give a combinatorial interpretation of the numerators for generating functions associated to Stirling numbers. Later, two variants were introduced: the quasi-Stirling permutations $\S_n^2(1212,2121)$ of \cite{AGPS19}, which avoid crossings and so are now also called noncrossing permutations, and the nonnesting permutations $\S_n^2(1221,2112)$, defined in \cite{E23}. Additional avoidance conditions in both cases were considered in \cite{AGPS19,AL25,EL25}. Other work in this area includes restricted Stirling permutations \cite{CMM}, block patterns in Stirling permutations \cite{RW}, and descents on quasi-Stirling permutations \cite{E21}.

Increasing $k$ leads to many other interesting results, where the $k$-Catalan numbers, sometimes called the Fuss-Catalan numbers, $\C_{k,n} = \frac1n \binom{kn}{n-1}$ appear repeatedly. When $k = 2$ these are the usual Catalan numbers $C_n$. For example, when $k \ge 2$,  Kuba and Panholzer \cite{KP} proved $|\S_n^{k-1}(212,312)| = \C_{k,n}$, Defant and Kravitz \cite{DK20} proved $|\S_n^{k-1}(221,231)| = \C_{k,n}$, and Williams \cite{W23} proved $|\S_n^{k-1}(112,123)| = \C_{k,n}$.

In \cite{E23}, Elizalde defined canon permutations as one of the most natural generalizations of nonnesting permutations for larger values of $k$. We say that $\pi \in \S_n^k$ is a canon permutation if there exists $\sigma \in \S_n$ such that for every $j \in [k]$, the subsequence of $\pi$ formed by the $j$-th copy of each entry (read from the left) is $\sigma$. We denote by $\mathcal{C}_n^k$ the set of \emph{canon permutations} of $\mathcal{S}_n^k$, by $\mathcal{C}_n^{k,\sigma}$ the set of canon permutations where the $j$-th copies of each entry form the permutation $\sigma$, and by $\mathcal{C}_n^{k}(\Lambda)$ the set of canon permutations avoiding all the elements of the set $\Lambda = \{\sigma_1,\dots,\sigma_r\}$. We let $c_n^{k,\sigma}(\Lambda) = |\C_n^{k,\sigma}(\Lambda)|$ and $c_n^k(\Lambda) = |\C_n^k(\Lambda)|$. When $k = 2$, these are precisely pattern avoiding nonnesting permutations, which were studied in \cite{EL25}. 

Elizalde's work on canon permutations in \cite{E23, E25} largely centered on the distribution of their descents. He showed their descent polynomial has a surprising product structure. Beck and Deligeorgaki \cite{BD24} recovered and generalized Elizalde's results using Stanley's $(P,\omega)$-partitions \cite{Sta11,Sta72}. They did this by realizing canon permutations as linear extensions of the product poset $[k] \times [n]$ up to suitable labeling. We return to this viewpoint in Section \ref{sec: symmetry}, and our results imply the descent polynomial statement of Theorem \ref{thm: 1a21b-descents}, which motivates further questions about canon descent polynomials with additional avoidance conditions. One of our first results further extends the family of $k$-regular words with a $k$-Catalan structure:

\begin{namedthm*}{Theorem \ref{thm: 112}}
    For $n \geq 1$ and $k \geq 2$,
    \[
    |\C_n^k(\tau)| = \C_{k,n}
    \]
    for $\tau \in \{112,122,211,221\}$.
\end{namedthm*}

We then investigate how other avoidance conditions interact with $k$-regular non-Stirling canon permutations for any $k \geq 2$. A summary of our results can be found in Table \ref{tab:results}. We note that up to symmetry the first five rows of the table along with Corollary \ref{cor: 112-123} and Remark \ref{rem: 321} suffice to enumerate $\C_n^k(\sigma,\tau)$ for any $\sigma \in \{112,122,211,221\}$ and $\tau \in \S_3 \cup \{\varnothing\}$. We restrict to the non-Stirling case because Proposition \ref{prop: 212} shows that the Stirling canon permutations lose the multidimensional structure that makes the others interesting. In Section \ref{sec: trees} we relate avoidance in $k$-ary trees to canon permutations avoiding $112$ and other patterns. In Section \ref{sec: symmetry}, we define and study symmetric avoidance in canon permutations, together with its connection to $k$-regular lattice words and to linear extensions of posets refining the Young diagram poset of shape $(k^n)$. We conclude in Section \ref{sec: further-study} with some open questions and directions for further study.

\renewcommand{\arraystretch}{1.3}
\begin{table}[H]
    \centering
    \begin{tabular}{|c|c|c|} 
    \hline
         $\Lambda$ & $c_n^k(\Lambda)$ & \text{Theorem}\\
         \hline \hline
         $\{112\}$ & \multirow{1}{*}{$\C_{k,n}$} &\multirow{1}{*}{Theorem \ref{thm: 112}}\\   \hline
         $\{112, 132\}$ & \multirow{2}{*}{$k^{n-1}$} &\multirow{2}{*}{Theorem \ref{thm: 112-132}}\\  \cline{1-1}
         $\{112, 213\}$ &&\\ \hline
         $\{112, 312\}$ & \multirow{2}{*}{$k$} &\multirow{2}{*}{Theorem \ref{thm: 112-312}}\\  \cline{1-1}
         $\{112, 231\}$ &&\\ \hline
         $\{1221,2112\}$ & $n! \, 2^{n-1}$ \, if \, $k \ge 3$ &Proposition \ref{prop: 1221-2112}\\ \hline
         \multirow{5}{5em}{\centering $\{12^j1,21^j2\}$ for $j \geq 2$} & $n!\, \binom{2j-2}{j-1}^{n-1}$ \, if \, $2(j-1) < k$ & Theorem \ref{thm: 12j1-2(j-1)<k} \\ \cline{2-3}
         &$n! \,T_{n-1}(k,j-1)$ \, if \, $2(j-1) = k$&Theorem \ref{thm: 12j1-k=2(j-1)}\\ \cline{2-3}
         &$2 \,\left[\binom{2j-2}{j-1} - \binom{2j-2}{2j-k-3} \right]$ \, if \,$n=2$ and $2 \le j \le k$&Proposition \ref{prop: 121-n=2}\\ \cline{2-3}
         &Open for $n \geq 3$ if $j < k < 2(j-1)$&Question \ref{Q: 12j1}\\ \cline{2-3}
         &$n!\, C_n^{(k)}$ \, if \,  $j \geq k$&Theorem \ref{thm: 12j1-j>=k}\\ \hline
         $\{1212,2121\}$ & $n!$ &Proposition \ref{prop: 1212-2121}\\ \hline
         $\{(12)^j,(21)^j\}$  for $j \ge 2$ & $2 \sum_{i=1}^{j-1} N(k,i)$ for $n = 2$, $k \ge 1$ &Theorem \ref{thm: 12j-21j}\\ \hline
         $\{1^j2^{k+1 - j}\}$& $C_n^{(k)}$ &Proposition \ref{prop: standardTab} \\ \hline
         $\{1^a21^b,2^a12^b\}$ for $a,b \ge 1$& $n! \, C_n^{(\min(a+b-1,k))}$ &Theorem \ref{thm: 1a21b-2a12b} \\ \hline
    \end{tabular}
    \caption{Summary of enumeration results for avoidance in canon permutations. $\C_{k,n}$ is the $k$-Catalan number of size $n$, $T_{n}(s,t)$ is the number of solutions to the $(s,t)$-tennis ball problem, $C_n^{(k)}$ are the $k$-dimensional Catalan numbers \cite{Su04}, $N(k,i)$ are the Narayana numbers.}
    \label{tab:results}
\end{table}

\begin{remark}
    \sloppy Several of the sequences from Table \ref{tab:results} appear in OEIS. The Fuss-Catalan numbers $\C_{k,n}$ form the columns of \cite[A062993]{OEIS}, the $k$-dimensional Catalan numbers $C_n^{(k)}$ are \cite[A000108, A005789, A005790, A005791]{OEIS} for $k = 2,3,4,5$ and $n! \, 2^{n-1}$ is \cite[A002866]{OEIS}.
\end{remark}

\section{Background and Notation} \label{sec: background}
We will represent canon permutations using the framework developed in \cite{E25}. A canon permutation $\pi \in \mathcal{C}_n^k$ corresponds to a pair consisting of a permutation $\sigma_\pi \in \S_n$ and a Young tableau of shape $(k^n)$, denoted by $T_\pi$. The tableau records the position of each entry of the permutation $\sigma_\pi$. For example,
\begin{center}
\ytableausetup{centertableaux}

   \begin{ytableau}
       \none[2] &1 &3 &6\\
       \none[1] &2&5&7\\
       \none[3] &4&8&9
   \end{ytableau}
   \end{center}

 \noindent corresponds to the canon permutation $212312133 \in \mathcal{C}_3^3$. When we speak of avoidance, we always mean in the one line notation for the canon multipermutation unless otherwise specified. When referring to a canon permutation, $\pi$, we will often speak of its corresponding permutation $\sigma_\pi$, as well as the entries in its Young tableau $T_\pi$. We use $\pi_{i,j}$ to denote the entry in the $i$-th row and $j$-th column of $T_\pi$. In $\pi$, this corresponds to the position of the $j$-th occurrence of $(\sigma_\pi)_i$.

 \begin{remark}
     We note that this framework can also be phrased in terms of posets. Since the Young diagram poset of shape $(k^n)$ is the product $[k] \times [n]$ of two chains, a canon permutation is a linear extension of $[k] \times [n]$ together with a choice of $\sigma_\pi \in \S_n$. This is the viewpoint developed in \cite{BD24}. One note of warning, their multiplicity parameter $m$ is our $k$. This poset perspective motivates much of Section \ref{sec: symmetry}.
 \end{remark}

 We denote the decreasing permutation of length $n$ by $\delta_n = n(n-1)\cdots21$ for ease of notation. Also for disjoint sets $S_1,S_2 \subset [kn]$ we say that $S_1 < S_2$ if $\max(S_1) < \min(S_2)$, meaning every element of $S_1$ is less than every element of $S_2$.

 Canon permutations are also invariant under certain standard symmetries. Given $\pi = \pi_1 \cdots \pi_N$, we let $\pi^r = \pi_N \pi_{N-1} \cdots \pi_2 \pi_1$ denote the \emph{reverse} of $\pi$. We denote by $\pi^c$, the permutation where $i$ is replaced by $n-i+1$, this is called the \emph{complement} of $\pi$; for example if $\pi = 212312133$ then $\pi^c = 232132311$. We note that both of these symmetries preserve the $k$-regularity of $\pi$; if the word was not regular, the usual definition of complement would not work.
 
 \begin{proposition} \label{prop: reverse-complement}
     If $\pi$ is a canon permutation, then so are $\pi^r$ and $\pi^c$.
 \end{proposition}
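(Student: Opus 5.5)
Both claims follow directly from the definition: we check that reversal and complementation each send the ``$j$-th copies'' subsequences of a canon permutation to the corresponding subsequences of another fixed permutation. Fix $\pi \in \C_n^{k,\sigma}$, so that for every $j \in [k]$ the subsequence formed by the $j$-th copies of the letters is $\sigma$. We already know that $\pi^r$ and $\pi^c$ lie in $\S_n^k$, since both operations preserve $k$-regularity.

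\emph{Complement.} Complementation only relabels letters; positions are unchanged. So for each letter $i$, the $j$-th occurrence of $i$ in $\pi$ sits at the same position as the $j$-th occurrence of $n-i+1$ in $\pi^c$. Hence the subsequence of $j$-th copies in $\pi^c$ is the letterwise complement of that in $\pi$, namely $\sigma^c$ (with $\sigma^c_t = n+1-\sigma_t$). This holds for every $j$, so $\pi^c \in \C_n^{k,\sigma^c}$.

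\emph{Reverse.} This is the only step needing a little care. Reading $\pi$ from the right, the $j$-th occurrence of a letter $i$ in $\pi^r$ is the $(k+1-j)$-th occurrence of $i$ in $\pi$, read left to right. Reversal also reverses the relative order of positions. So the subsequence of $j$-th copies in $\pi^r$ is the reverse of the subsequence of $(k+1-j)$-th copies in $\pi$, which is $\sigma^r$. As $j$ ranges over $[k]$, so does $k+1-j$, hence every $j$ gives the same permutation $\sigma^r$, and $\pi^r \in \C_n^{k,\sigma^r}$.

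There is no real obstacle; the only subtle point is the reindexing $j \mapsto k+1-j$ in the reversal step. This can also be phrased in the tableau language of Section~\ref{sec: background}. Under complementation, $T_\pi$ is unchanged and $\sigma_\pi$ is complemented. Under reversal, each entry $x$ is replaced by $kn+1-x$, then the tableau is rotated by $180^\circ$ (which keeps rows and columns increasing), and $\sigma_\pi$ is reversed. In both cases the result is again a valid pair.
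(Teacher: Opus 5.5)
Your proof is correct and is essentially the paper's argument: the paper states it in the tableau language, $\pi^r = (\sigma_\pi^r, T_\pi^e)$ with $T^e_{i,j} = nk - T_{n-i+1,k-j+1}+1$ and $\pi^c = (\sigma_\pi^c, T_\pi)$, which is exactly your closing remark. Your direct subsequence check, including the reindexing $j \mapsto k+1-j$ under reversal, simply spells out why that tableau description is valid.
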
 
 
 \begin{proof}
     If $\pi = (\sigma_\pi, T_\pi)$, then $\pi^r = (\sigma_\pi^r, T_\pi^e)$ where $T^e_{i,j} = nk - T_{n-i+1,k-j+1}+1$ is the evacuation, sometimes called the Sch\"utzenberger involution, of $T$. Similarly, $\pi^c = (\sigma_\pi^c, T_\pi)$. 
 \end{proof}
 
 We also note that as with standard permutations, these operations commute with one another; that is, $\pi^{rc} = \pi^{cr}$. We now consider two simple results that give the reader a sense for how the arguments will go in the rest of this paper. We first consider an extreme example where $\sigma_\pi$ is fixed and $T_\pi$ is unrestricted,

 \begin{proposition} \label{prop: standardTab}
    For $n \geq 1$, $k \geq 1$, let $\tau_{j,\ell} = 1^j2^\ell$ for any $1 \leq j,\ell$ with $j+\ell = k+1$. Then $c_n^{k}(\tau_{j,\ell})$ is the number of standard Young tableaux of shape $(k^n)$:
    \[
    C_n^{(k)} = (nk)! \;\frac{\Phi(n) \cdot \Phi(k)}{\Phi(n+k)}
    \]
    where $\Phi(m) = \prod_{i=0}^{m-1} i!$, the superfactorial function.
\end{proposition}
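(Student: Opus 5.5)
The idea is to show that avoiding $\tau_{j,\ell}=1^j2^\ell$ pins down $\sigma_\pi$ completely, namely $\sigma_\pi=\delta_n$, while leaving $T_\pi$ unrestricted. Then $\pi\mapsto T_\pi$ is a bijection from $\C_n^k(\tau_{j,\ell})$ onto the standard Young tableaux of shape $(k^n)$. The key tool is the basic interleaving property of a canon permutation. If $a$ precedes $b$ in $\sigma_\pi$, then for every $i\in[k]$ the $i$-th copy of $a$ precedes the $i$-th copy of $b$ in $\pi$. This holds because both $i$-th copies lie in the subsequence of $i$-th copies, which reads $\sigma_\pi$.

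\emph{Step 1: avoidance forces $\sigma_\pi=\delta_n$.} Suppose $\sigma_\pi$ has an ascent pair, i.e.\ values $a<b$ with $a$ appearing before $b$ in $\sigma_\pi$. By the interleaving property, the $j$-th copy of $a$ precedes the $j$-th copy of $b$. Hence the first $j$ copies of $a$ all lie before the copies $j,j+1,\dots,k$ of $b$. There are $k-j+1=\ell$ such copies of $b$, so these entries form an occurrence of $1^j2^\ell$. Therefore any avoider has $\sigma_\pi$ with no such pair, which means $\sigma_\pi=\delta_n$.

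\emph{Step 2: every $\pi$ with $\sigma_\pi=\delta_n$ avoids $\tau_{j,\ell}$.} Take any $a<b$. Then $b$ precedes $a$ in $\delta_n$, so the $i$-th copy of $b$ precedes the $i$-th copy of $a$ for every $i$. Suppose $\pi$ contained an occurrence of $1^j2^\ell$ using value $a$ for the $1$'s and $b$ for the $2$'s. The last $1$ is at or after the $j$-th copy of $a$. That copy comes after the $j$-th copy of $b$, so every $2$ in the occurrence must be among copies $j+1,\dots,k$ of $b$. There are only $k-j=\ell-1<\ell$ of these, a contradiction. Note also that an occurrence cannot use a single value, since the pattern has two distinct letters. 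So $\pi$ avoids $\tau_{j,\ell}$, regardless of $T_\pi$.

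\emph{Step 3: counting.} By Steps 1 and 2, $\C_n^k(\tau_{j,\ell})=\C_n^{k,\delta_n}$. By the framework of Section~\ref{sec: background}, canon permutations with a fixed $\sigma_\pi$ correspond bijectively to standard Young tableaux $T_\pi$ of shape $(k^n)$. The count of these tableaux follows from the hook length formula. The cell $(i,j)$ of the $n\times k$ rectangle has hook length $(k-j)+(n-i)+1$, so the product of hook lengths is $\prod_{i=0}^{n-1}\frac{(i+k)!}{i!}=\frac{\Phi(n+k)}{\Phi(k)\Phi(n)}$. This gives the stated formula for $C_n^{(k)}$.

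I expect no serious obstacle. The only point needing care is the exact copy-index bookkeeping in Steps 1 and 2: $\ell=k-j+1$ is exactly the number of $b$'s at or after index $j$, and $\ell-1$ is exactly the number strictly after index $j$. This is what makes the threshold $j+\ell=k+1$ sharp.
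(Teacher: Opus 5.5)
Your proof is correct and follows essentially the same route as the paper. An ascent in $\sigma_\pi$ combined with column-strictness of $T_\pi$ yields a subsequence $a^j b^\ell$, forcing $\sigma_\pi=\delta_n$; conversely, when $\sigma_\pi=\delta_n$ an occurrence would need the $j$-th copy of the smaller letter to precede the $j$-th copy of the larger one, which is exactly the paper's impossible condition $\pi_{i,j}<\pi_{i-m,j}$. Your write-up does the copy-index count more explicitly and adds a hook-length check of the product formula, which the paper only quotes.
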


\begin{proof}
    First, given $\pi \in \C_n^{k}(\tau_{j,\ell})$, notice that $\sigma_\pi = \delta_n$. If there is some increase, $(\sigma_\pi)_i < (\sigma_\pi)_{i+1}$, then since $T_\pi$ is standard, $\pi_{i,1} < \pi_{i,2} < \cdots < \pi_{i,j} < \pi_{i+1,j} < \cdots < \pi_{i+1,k}$. But this means $\pi$ contains $(\sigma_\pi)_i^{j} (\sigma_\pi)_{i+1}^{k-j+1} = (\sigma_\pi)_i^j (\sigma_\pi)_{i+1}^{\ell}$ as a subsequence, which is a $1^j2^{\ell}$ pattern.

    But now, if $\sigma_\pi = \delta_n$, $T_\pi$ can be any standard Young tableau. Indeed, the only way to get a $1^j2^\ell$ pattern is to have $\pi_{i,j} < \pi_{i-m,j}$ for some $i,m$ because $T_\pi$ has $k$ columns and $j+\ell = k+1$, but this is impossible because $T_\pi$ is standard.
\end{proof}

\begin{remark}
    This also holds for $\tau_{j,\ell}^r$, $\tau_{j,\ell}^{rc}$ and $\tau_{j,\ell}^c$ by Proposition \ref{prop: reverse-complement}. One important note is that the hypothesis $j+\ell = k+1$ cannot be weakened. If $j+\ell > k+1$, we can no longer conclude that $\sigma_\pi = \delta_n$. For example, when $k = 3$ and $n = 2$, the canon permutation $\pi = 121212$ has $\sigma_\pi = 12$, and still avoids $1^22^3$; here $j+\ell = 5 > 4 = k+1$.
\end{remark}

We can recover a known result about nonnesting permutations avoiding $112$,

\begin{corollary}
    For $n \geq 1$, $c_n^2(112) = C_n$, the $n$-th Catalan number.
\end{corollary}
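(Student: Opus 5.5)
This is the case $k=2$, $j=\ell=1$... but wait: the pattern $112$ is $1^22^1$, so $j=2$ and $\ell=1$. Then $j+\ell=3=k+1$ when $k=2$, so Proposition \ref{prop: standardTab} applies directly with $\tau_{2,1}=1^22=112$. It gives $c_n^2(112)=C_n^{(2)}$, the number of standard Young tableaux of shape $(2^n)$.

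So the plan has two steps. First, check that the hypotheses of Proposition \ref{prop: standardTab} hold for $k=2$, $j=2$, $\ell=1$. This yields $c_n^2(112) = C_n^{(2)}$. It also recovers the underlying structure: $\sigma_\pi=\delta_n$ is forced, and $T_\pi$ ranges over all standard tableaux of shape $(2^n)$.

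Second, identify $C_n^{(2)}$ with $C_n$. Evaluating the formula of Proposition \ref{prop: standardTab} gives
\[
C_n^{(2)} = (2n)!\,\frac{\Phi(n)\Phi(2)}{\Phi(n+2)} = \frac{(2n)!}{n!\,(n+1)!} = C_n,
\]
since $\Phi(n+2)/\Phi(n) = n!\,(n+1)!$ and $\Phi(2)=1$. Alternatively, we can invoke the standard bijection between standard Young tableaux of shape $(2^n)$ and Dyck paths of semilength $n$: reading $1,\dots,2n$ in order, record an up step if the entry lies in column $1$ and a down step if it lies in column $2$. The tableau condition is exactly the ballot condition.

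There is no real obstacle here. The only point needing care is matching the exponent convention, so that $112$ is identified as $\tau_{2,1}$ and not $\tau_{1,2}=122$. By the remark following Proposition \ref{prop: standardTab}, the conclusion holds for either pattern, so even this is harmless.
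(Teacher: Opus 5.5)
Your proof is correct and is exactly the paper's argument: specialize Proposition~\ref{prop: standardTab} to $k=2$ with $\tau_{2,1}=112$, then identify the number of standard Young tableaux of shape $(2^n)$ with $C_n$. Your explicit check via the $\Phi$ formula (or via Dyck paths) fills in a step the paper takes as known; just delete the false start ``$j=\ell=1$'' from the final write-up.
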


\begin{proof}
    By Proposition \ref{prop: standardTab}, this is the case where $k = 2$ and we have $\tau_{2,1}$, so $c_n^2(112)$ is the number of standard tableaux of shape $(2^n)$ which is $C_n$.
\end{proof}

We will also recover this result as a special case of Theorem \ref{thm: 112}, where we classify $c_n^k(112)$ for any values of $n$ and $k$. We now consider the opposite extreme, $T_\pi$ will be fixed and $\sigma_\pi$ will be unrestricted. This is a generalization of \cite[Theorem 2.2]{EL25}, the idea behind the proof is essentially identical, but we rephrase it in terms of Young tableaux.

\begin{proposition} \label{prop: 212}
    For $n \geq 1$, $c_n^k(\tau) = n!$ for any $k$ and $\tau \in \{121,212\}$.
\end{proposition}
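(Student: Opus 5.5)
By Proposition \ref{prop: reverse-complement}, $121^c = 212$ and complementation is a bijection on $\C_n^k$. So it suffices to show $c_n^k(212) = n!$. The plan is to show that avoiding $212$ forces $T_\pi$ to be one specific tableau, namely the row-reading tableau with $\pi_{i,j} = (i-1)k + j$. That tableau makes $\pi$ the concatenation $(\sigma_\pi)_1^k (\sigma_\pi)_2^k \cdots (\sigma_\pi)_n^k$. Conversely, for every $\sigma \in \S_n$ this block word is a canon permutation and avoids $212$, which gives exactly $n!$ elements.

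\textbf{The concatenations avoid $212$.} Given $\sigma$, the word $\sigma_1^k \cdots \sigma_n^k$ has $j$-th copies forming $\sigma$, so it is canon. A $212$ occurrence needs two copies of a letter $b$ with a different letter $a<b$ strictly between them. But the copies of each letter form a contiguous block, so this is impossible. (Here $k\ge 1$; for $k=1$ the statement is trivial anyway.)

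\textbf{Avoiding $212$ forces this tableau.} Let $\pi$ avoid $212$, and suppose some letter's copies are not contiguous in $\pi$. Then some copies $b \cdots c \cdots b$ of a letter $b$ surround a letter $c \ne b$. If $c < b$, this is a $212$ directly. The main obstacle is the case $c > b$, which only gives $121$.

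For that case I will use the canon structure, the key point being that rows of $T_\pi$ interleave like columns. Say $b=(\sigma_\pi)_r$ and $c=(\sigma_\pi)_s$, with $\pi_{r,j} < \pi_{s,m} < \pi_{r,j+1}$.
\begin{itemize}
\item If $s > r$: column monotonicity gives $\pi_{r,m} < \pi_{s,m}$, so $m \le j$. Then $\pi_{s,1} \le \pi_{s,m} < \pi_{r,j+1}$, and $\pi_{s,1} > \pi_{r,1}$, so $c$'s first copy lies between $b$'s first and last copies.
\item If $s < r$: symmetrically, $c$'s last copy $\pi_{s,k} > \pi_{s,m} > \pi_{r,j}$ and $\pi_{s,k} < \pi_{r,k}$, so again a copy of $c$ lies between copies of $b$.
\end{itemize}

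So the problem reduces to showing that every $212$-avoider has contiguous blocks. I will argue by minimality. Choose a pair of letters $x\ne y$ with a copy of $y$ strictly between two copies of $x$, such that the span between those two copies of $x$ is as small as possible. If $y<x$, we have a $212$ and are done. If $y>x$, then $y$ itself cannot have a copy of a smaller letter between two of its copies. So either all copies of $y$ lie inside the span of $x$, or some copy of $y$ lies outside it.

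\begin{itemize}
\item If some copy of $y$ lies outside, then $x$ and $y$ cross. Here I will use the canon interleaving: the rows of $x$ and $y$ are ordered consistently in every column. Together with standardness, this should show that crossing forces a copy of $x$ strictly between two copies of $y$, which is a $212$ since $x<y$.
\item If all copies of $y$ lie inside $x$'s span, then by the column argument above the rows of $x$ and $y$ must nest. But nesting is impossible in a canon permutation unless $k$ copies fit: if $r<s$, then $\pi_{r,k} > \pi_{s,k}$ contradicts columns.
\end{itemize}

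Hence any non-contiguous occurrence produces a crossing, and every crossing pair contains a $212$. I expect the most care to be needed in this final crossing analysis: showing that two interleaved rows with $r<s$ and $\pi_{r,j}<\pi_{s,m}<\pi_{r,j+1}$ also yield $\pi_{s,m'}<\pi_{r,j'}<\pi_{s,m''}$. I plan to take $j' = j+1$ and check it against $\pi_{s,j+1} > \pi_{r,j+1}$, which should go through when $m \le j$ and $j+1 \le k$.
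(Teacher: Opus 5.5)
Your strategy is essentially the paper's. In a canon permutation, two rows that are not separated give both an $xyx$ and a $yxy$ occurrence, and whichever has the larger letter on the outside is a $212$. So the tableau must be the row-reading one, and every $\sigma \in \S_n$ then works. As written, though, the decisive step is only a plan, and it is checked for just one row order.

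Your final paragraph verifies $\pi_{s,m}<\pi_{r,j+1}<\pi_{s,j+1}$ (using $m\le j$) only when the outer letter's row comes first, $r<s$. Since $\sigma_\pi$ is arbitrary, $r>s$ also occurs and must be handled. It is easy: the columns and the hypothesis give $\pi_{s,j}<\pi_{r,j}<\pi_{s,m}$, which forces $m\ge j+1$. So the copy $\pi_{r,j}$ of the outer letter lies strictly between two distinct copies of the inner one.

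Two smaller points:
\begin{itemize}
\item Your nesting bullet likewise treats only $r<s$. For $r>s$, having all copies of $y$ inside the span gives $\pi_{s,1}>\pi_{r,1}$, contradicting the columns.
\item Your first pair of bullets for $c>b$ only re-derives that a copy of $c$ sits between copies of $b$. That is the hypothesis, not the reverse statement you need.
\end{itemize}

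Once you have this two-case interleaving lemma, the minimal-span choice and the crossing/nesting split are unnecessary. Apply the lemma directly to any $b\cdots c\cdots b$ with $c>b$ to get $c\cdots b\cdots c$, which is a $212$. The paper is shorter still because it compares only consecutive rows at their extreme copies. If $\pi_{i,k}>\pi_{i+1,1}$, then $\pi_{i,1}<\pi_{i+1,1}<\pi_{i,k}<\pi_{i+1,k}$ exhibits both patterns at once, with no bookkeeping about which copies are involved.
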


\begin{proof}
    Up to Proposition \ref{prop: reverse-complement}, it suffices to prove this result for $\tau = 212$. For a canon permutation to avoid $212$ we must have $\pi_{i,k} < \pi_{i+1,1}$. Otherwise, $\pi_{i,1} < \pi_{i+1,1} < \pi_{i,k}$ and $\pi_{i+1,1} < \pi_{i,k} < \pi_{i+1,k}$. If $(\sigma_\pi)_i < (\sigma_\pi)_{i+1}$ then the second string of inequalities implies that $(\sigma_\pi)_{i+1} (\sigma_\pi)_i (\sigma_\pi)_{i+1}$ is a subsequence of $\pi$. If instead $(\sigma_\pi)_{i} > (\sigma_\pi)_{i+1}$ the first string of inequalities implies that $(\sigma_\pi)_i (\sigma_\pi)_{i+1} (\sigma_\pi)_i$ is a subsequence of $\pi$. In either case $\pi$ contains a $212$.

    This shows that $T_\pi$ must have all entries in row $i$ less than all entries in row $i+1$ for $1 \leq i \leq n-1$. Any permutation paired with this tableau will clearly result in a canon permutation avoiding $212$. This completes the proof.    
\end{proof}

This means the Stirling canon permutations are generally not interesting because studying avoidance in $\pi$ that are both canon and Stirling is equivalent to just considering avoidance in $\sigma_\pi$. We do note, though, that not all Stirling permutations are canon and vice versa. Indeed $\pi = 1212$ is canon, but not Stirling, and $1221$ is Stirling but not canon. This distinction is important, because although being canon is a natural generalization of nonnesting permutations, which were a generalization of $212$ avoiding permutations, canon permutations are not naturally defined by an avoidance condition. In light of Proposition \ref{prop: 212}, we now focus our effort on the non-Stirling canon permutations.
 
\section{Non-Stirling Canon Permutations}

In this section we enumerate $\C_n^k(\sigma,\tau)$ for $\sigma \in \{112,122,211,221\}$ and $\tau \in \S_3\cup \{\varnothing\}$. Up to Proposition \ref{prop: reverse-complement}, it suffices to enumerate $\C_n^k(112,\tau)$ for $\tau \in \S_3\cup \{\varnothing\}$, this is the case we focus on. If we consider $112$ avoiding canon permutations, the enumeration differs with $k$, and we recover the $k$-Catalan numbers,

\begin{theorem} \label{thm: 112}
For $n \geq 1$ and $k \geq 2$, we have
\[
c_n^k(\tau) = \frac{1}{n} \binom{kn}{n-1} = \C_{k,n}
\]
for $\tau \in \{112,122,211,221\}$.
\end{theorem}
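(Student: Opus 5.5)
By Proposition \ref{prop: reverse-complement} it suffices to treat $\tau = 112$, since $122 = (112)^{rc}$, $211 = (112)^{r}$ and $221 = (112)^{c}$, and reverse and complement are bijections on $\C_n^k$. The plan has three parts. First, show that $\sigma_\pi$ is forced to be $\delta_n$. Second, translate $112$-avoidance into a condition on $T_\pi$ alone. Third, count the tableaux satisfying that condition by a decomposition giving the Fuss--Catalan functional equation.

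\emph{Step 1: $\sigma_\pi = \delta_n$.} Note that $\pi$ avoids $112$ exactly when, for every pair of values $a<b$, the second copy of $a$ comes after the last copy of $b$. Suppose rows $i<i'$ of $T_\pi$ carry values $(\sigma_\pi)_i < (\sigma_\pi)_{i'}$. Since $T_\pi$ is standard,
\[
\pi_{i,2} < \pi_{i',2} \le \pi_{i',k}.
\]
So two copies of the smaller value precede the last copy of the larger one, which is a $112$ (this uses $k \ge 2$). Hence $\sigma_\pi$ has no ascent, so $\sigma_\pi = \delta_n$. This is the same argument as in Proposition \ref{prop: standardTab}.

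\emph{Step 2: a condition on the tableau.} With $\sigma_\pi = \delta_n$, for rows $i<i'$ the smaller value sits in row $i'$. The avoidance condition becomes $\pi_{i',2} > \pi_{i,k}$ for all $i<i'$. Because the columns of $T_\pi$ increase, it suffices to require this for adjacent rows:
\[
\pi_{i+1,2} > \pi_{i,k} \quad \text{for } 1 \le i \le n-1.
\]
Conversely, every standard tableau satisfying these inequalities, paired with $\delta_n$, gives a $112$-avoider. So $c_n^k(112)$ equals the number of standard Young tableaux of shape $(k^n)$ in which each row's second entry exceeds the previous row's last entry.

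\emph{Step 3: counting these tableaux.} Record $T_\pi$ as the word $w \in \S_n^k$ whose $p$-th letter is the row containing $p$. In this word, only first copies of rows $i+1,i+2,\dots$ can appear strictly inside the span of row $i$, and they appear in increasing order. Each first copy of row $i+1$ behaves like an ``opener.'' The remaining $k-1$ copies of row $i+1$ come after row $i$ closes, and between consecutive copies further nested blocks can be placed.

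I would make this precise as a recursive decomposition of $w$ around its last row or its first row into $k$ independent slots, each filled by a smaller structure of the same type. This should give
\[
F(x) = 1 + xF(x)^k, \qquad F(x) = \sum_{n \ge 0} c_n^k(112)\,x^n .
\]
Equivalently, it should be a bijection with $k$-ary trees on $n$ internal nodes. Lagrange inversion, or the cycle lemma, then yields $\frac1n\binom{kn}{n-1} = \C_{k,n}$.

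The main obstacle is making the decomposition in Step 3 rigorous. The difficulty is that first entries of later rows can interleave into earlier rows' spans, for example $\pi_{i+2,1} < \pi_{i,k}$. So I must identify the $k$ ``gaps'' correctly and check that the pieces are independent.

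My first attempt is to remove row $1$, whose entry $1$ is forced. Its entries $\pi_{1,2},\dots,\pi_{1,k}$, together with the position of $\pi_{2,1}$, should split the remaining structure into $k$ segments. If that fails, I would sanity-check small cases ($n=2$ gives $k$, and $n=3$ should give $\frac{3k(3k-1)}{6}\cdot\frac{1}{1}\cdot$ the appropriate Fuss--Catalan value) and instead build a direct bijection to $k$-ary lattice paths. In that bijection, the first copy of each row is an up-step of size $k-1$ and each later copy is a down-step, with the row condition becoming the path staying weakly above the axis.
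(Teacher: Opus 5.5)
Your Steps 1 and 2 are correct and match the paper's argument exactly: avoiding $112$ forces $\sigma_\pi=\delta_n$, and is then equivalent to $T_\pi$ being standard with $\pi_{i,k}<\pi_{i+1,2}$ for $1\le i\le n-1$. The gap is Step 3, which is where the enumeration actually happens; you leave it at ``should give $F=1+xF^k$.'' You never carry out your primary plan of cutting along the entries of row $1$, and it is not where the $k$ pieces come from. In the correct decomposition the delimiting steps are first passages, not $\pi_{1,2},\dots,\pi_{1,k}$. For instance, with $k=n=2$ and column $1$ equal to $\{1,2\}$, the entry $\pi_{1,2}=3$ lies inside the first piece, and the delimiter is $\pi_{2,2}=4$. (Also, your $n=3$ sanity value should simply be $\C_{k,3}=\frac13\binom{3k}{2}=\frac{3k(3k-1)}{6}$.)

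The missing idea makes your fallback rigorous, and with it your fallback becomes the paper's proof. Since rows increase and $\pi_{i,k}<\pi_{i+1,2}$, all entries outside column $1$ form a single chain $\pi_{1,2}<\cdots<\pi_{1,k}<\pi_{2,2}<\cdots<\pi_{n,k}$. Column $1$ is also a chain, so $T_\pi$ is determined by which of $1,\dots,nk$ lie in column $1$. Every tableau inequality except $\pi_{i,1}<\pi_{i,2}$ then holds automatically. The condition that $\pi_{i,1}<\pi_{i,2}$ for all $i$ is equivalent to this: in every prefix of $1,\dots,nk$, the number of values outside column $1$ is at most $k-1$ times the number inside it. So it is this inequality, not $\pi_{i+1,2}>\pi_{i,k}$, that becomes ``stay weakly above the axis.'' The latter is what lets the step sequence determine the tableau. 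Conversely, every such sequence gives a valid tableau. This yields a bijection with paths of $n$ steps $+(k-1)$ and $(k-1)n$ steps $-1$ that stay nonnegative; these are the paper's paths weakly below $y=(k-1)x$. You can finish in either of two ways. The first-step decomposition $EA_1NA_2N\cdots A_{k-1}NA_k$, where the $N$'s are the first passages to heights $k-2,\dots,0$, gives $F=1+xF^k$. Alternatively, the cycle lemma gives $\frac{1}{kn+1}\binom{kn+1}{n}=\frac1n\binom{kn}{n-1}$ directly.
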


\begin{proof}
    We prove this for $\tau = 112$, the other options for $\tau$ follow by taking reverses and complements. First, in order to avoid $112$ the permutation associated to the Young tableau must be the decreasing permutation. If not, there is an increase in the corresponding permutation $(\sigma_\pi)_i < (\sigma_\pi)_{i+1}$. Consider the entries in rows $i$ and $i+1$ of $T_\pi$. We have $\pi_{i,1} < \pi_{i,2} < \pi_{i+1,2}$ since the Young tableau is standard, so $\pi$ has subsequence $(\sigma_\pi)_i (\sigma_\pi)_i (\sigma_\pi)_{i+1}$, which forms a $112$ since $(\sigma_\pi)_i < (\sigma_\pi)_{i+1}$. This means $\sigma_\pi = \delta_n$, the decreasing permutation.

    Next, we claim that $\pi_{i,k} < \pi_{i+1,2}$ for $1 \leq i \leq n-1$. If not, there is some $i$ for which $\pi_{i,k} > \pi_{i+1,2}$, but then we have $\pi_{i+1,1} < \pi_{i+1,2} < \pi_{i,k}$. This implies the existence of the subsequence $(\delta_n)_{i+1}(\delta_n)_{i+1} (\delta_n)_i$ in $\pi$, which is a $112$ pattern.

    Finally, we claim that for any standard Young tableau $T$ satisfying $T_{i,k} < T_{i+1,2}$ for $1 \leq i \leq n-1$, $\pi = (\delta_n,T)$ is a $112$ avoiding canon permutation. It is certainly a canon permutation, it remains to show that it avoids $112$. In order for $\pi$ to contain a $112$ the larger entry must appear in a smaller row of $T$, so the positions of the $112$ subsequence would have to be $\pi_{i,1} < \pi_{i,\ell} < \pi_{j,k}$ for some $\ell \le k$ and $j < i$. But this never occurs because $\pi_{j,k} < \pi_{i,2}$ if $j < i$.

    We can therefore construct any permutation $\pi \in \C_n^k(112)$ by deciding whether to place entries in the first column, or in columns $2,\dots,k$ since the entries in those columns must appear in the order $\pi_{1,2} < \pi_{1,3} < \cdots < \pi_{1,k} < \pi_{2,2} < \cdots < \pi_{n,k}$. We must still respect the standard condition on the tableau which is equivalent to: if we have placed $i$ entries in the first column, we can place at most $(k-1)i$ entries in columns $2,\dots,k$, otherwise we would be placing an entry in column $2$ of a row that does not have an entry in column $1$.

    We can think of this construction process via a bijection with lattice paths. The previous paragraphs show that $\pi \in \C_n^k(112)$ are in bijection with lattice paths of $n$ East steps and $(k-1)n$ North steps from $(0,0)$ to $(n,(k-1)n)$ that lie weakly below the line $y = (k-1)x$. Indeed, each east step corresponds to placing an entry in the first column, and north steps correspond to placing an entry in columns $2,\dots,k$ of our tableau. We can decompose such lattice paths as
    \[
    EA_1N A_2 N \cdots A_{k-1}NA_k
    \]
    where each $A_i$ is another such lattice path of length $n_i$ where $n_i$ could be zero, and $n_1 + \cdots + n_{k} = n-1$. This shows that $F(x) = \sum_{n \geq 0} c_n^k(112) x^n$ satisfies
    \[
    F(x) = xF(x)^{k} + 1
    \]
    which by Lagrange inversion is precisely the generating function for the stated binomial.
\end{proof}   

\begin{remark}
    Notice when $k = 2$, this recovers \cite[Theorem 2.1]{EL25} where the authors show the number of nonnesting permutations avoiding $112$ of length $n$ is $C_n$, the $n$-th Catalan number. The $k = 2$ case is special because once you show that $\sigma_\pi = \delta_n$, $T_\pi$ can be any standard Young tableau of shape $2^n$, of which there are $C_n$.
\end{remark}

\begin{example}
     Let's consider an example of this bijection between lattice paths and $\pi \in \C_n^k(112)$. For example, given $\pi = 323132211$, represented as a tableau this is,
    \begin{center}
\ytableausetup{centertableaux}

   \begin{ytableau}
       \none[3] &1 &3 &5\\
       \none[2] &2&6&7\\
       \none[1] &4&8&9
   \end{ytableau}
   \end{center}

   This corresponds to the lattice path $EENENNNNN$. Since this is a standard tableau, the number of $N$ steps can never exceed $(k-1)$ times the number of $E$ steps because this would imply you place an entry in column $2$ of a row that does not have an entry in column $1$. Similarly, given such a lattice path, we can easily reverse this process to find an allowable filling of our Young tableau producing a permutation in $\C_n^k(112)$.
\end{example}

\begin{corollary} \label{cor: 112-123}
    For $n \geq 1$ and $k \geq 2$,
    \[
    c_n^k(112,123) = \frac{1}{n} \binom{nk}{n-1}
    \]
\end{corollary}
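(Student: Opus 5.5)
The plan is to show that $\C_n^k(112,123) = \C_n^k(112)$, which reduces the corollary to Theorem \ref{thm: 112}. The inclusion $\C_n^k(112,123) \subseteq \C_n^k(112)$ is trivial, so the work is to show that every $112$-avoiding canon permutation automatically avoids $123$.

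For this I will use the structural description obtained in the proof of Theorem \ref{thm: 112}: any $\pi \in \C_n^k(112)$ has $\sigma_\pi = \delta_n$. Hence row $i$ of $T_\pi$ records the positions of the letter $n-i+1$.

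Now suppose $\pi$ contains $123$ at positions $p<q<r$ with values $a<b<c$. These values lie in rows $r_a > r_b > r_c$ of $T_\pi$. The occurrence says that some copy of $a$ precedes some copy of $b$, which precedes some copy of $c$. In particular, there is an entry of a larger row that precedes an entry of a smaller row. This does not by itself contradict standardness, so I need the second structural fact from the proof of Theorem \ref{thm: 112}: $\pi_{i,k} < \pi_{i+1,2}$ for all $i$. Iterating it gives $\pi_{j,k} < \pi_{i,2}$ whenever $j<i$.

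The key step is to handle the copy of $b$ at position $q$. It sits in row $r_b$, and the copy of $c$ sits in row $r_c < r_b$ at a later position $r>q$. Because $r_c < r_b$, every entry of row $r_c$ precedes $\pi_{r_b,2}$. So $q < r \le \pi_{r_c,k} < \pi_{r_b,2}$, which forces $q = \pi_{r_b,1}$: the $b$ must be the first copy of its letter.

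Apply the same argument to the pair $(a,b)$. The copy of $a$ lies in row $r_a > r_b$ at position $p < q = \pi_{r_b,1}$. Since $p<q$ and $q \le \pi_{r_b,k} < \pi_{r_a,2}$, we get $p = \pi_{r_a,1}$. But then $\pi_{r_a,1} < \pi_{r_b,1}$ with $r_b < r_a$, contradicting that the first column of the standard tableau $T_\pi$ increases down the rows. So no $123$ occurs, and $c_n^k(112,123) = c_n^k(112) = \C_{k,n}$.

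The main obstacle is this middle step: pinning down that the middle and smallest letters of a putative $123$ must both be first copies. It requires combining the standardness of $T_\pi$ with the inequality $\pi_{i,k} < \pi_{i+1,2}$ in the right order. The rest is immediate from Theorem \ref{thm: 112}.
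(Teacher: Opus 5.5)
Your proposal is correct and follows essentially the same route as the paper: reduce to Theorem \ref{thm: 112} by showing every $112$-avoider avoids $123$, using $\sigma_\pi=\delta_n$, standardness of $T_\pi$, and the iterated inequality $\pi_{i,k}<\pi_{i+1,2}$. The only difference is the order of the two ingredients. The paper first uses standardness to show the three columns are distinct, so at most one entry is in column $1$, and then applies the inequality to the other two. You apply the inequality first to force the copies of $a$ and $b$ into column $1$, and then contradict the increasing first column.
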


\begin{proof}
    All of the permutations produced in the proof of Theorem \ref{thm: 112} necessarily avoid $123$. Indeed, given $\pi \in \C_n^k(112)$, suppose for sake of contradiction that $\pi$ contains a $123$. Since $\sigma_\pi = \delta_n$, we must have $\pi_{i_3,j_3} < \pi_{i_2,j_2} < \pi_{i_1,j_1}$ for $i_1 < i_2 < i_3$. But since $T_\pi$ is standard, this means the $j_i$ are pairwise distinct. In particular, at most one of the $j_i$ can occur in the first column. Suppose the two columns that are not column $1$ are $1 < j_a,j_b$ where $a < b$. Since $\pi \in \C_n^k(112)$ and $i_a < i_b$, by the proof of Theorem \ref{thm: 112}, we must have $\pi_{i_a,j_a} < \pi_{i_b,j_b}$. This is a contradiction.
\end{proof}

\begin{theorem} \label{thm: 112-132}
    For $n \geq 1$ and $k \geq 2$, we have
    \[
    c_n^k(112,\tau) = k^{n-1}
    \]
    for $\tau \in \{132,213\}$.
\end{theorem}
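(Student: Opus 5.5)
In the proof of Theorem \ref{thm: 112} we showed that every $\pi \in \C_n^k(112)$ has $\sigma_\pi = \delta_n$, and that $T_\pi$ is a standard tableau with $T_{i,k} < T_{i+1,2}$. We also showed that every such tableau gives a permutation in $\C_n^k(112)$. I will encode such a $\pi$ as a word in the letters $E_1,\dots,E_n$ and $N$. Here $E_i$ is the first copy of row $i$, which is the value $n-i+1$. The non-first copies occur in the forced order: row $1$ in columns $2,\dots,k$, then row $2$ in columns $2,\dots,k$, and so on. The plan is to show that, in both cases $\tau = 132$ and $\tau = 213$, avoiding $\tau$ is equivalent to one and the same extra condition:

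\medskip
\noindent $(\ast)$ for each $2 \le i \le n$, at the moment $E_i$ is placed, rows $1,\dots,i-2$ are already complete.
\medskip

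\noindent Then I count the tableaux satisfying $(\ast)$.

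\emph{Case $\tau = 132$.} An occurrence uses a smallest letter $a$ in some row $i$, followed by two larger letters in decreasing order. These lie in rows $j < j'$ with $j,j' < i$. Suppose $a$ is a non-first copy of row $i$. Everything after it belongs to rows $>i$, since $T_{j,k} < T_{i,2}$ for $j<i$, and those letters are smaller. So $a$ must be the first copy $E_i$. The first copies of rows $<i$ precede $E_i$, so the two larger letters are non-first copies of two distinct rows $j<j'<i$. Since the non-first copies appear row by row, they automatically come in decreasing order of value. Hence $\pi$ contains $132$ if and only if, after $E_i$, non-first copies of two distinct earlier rows remain. This is exactly the negation of $(\ast)$.

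\emph{Case $\tau = 213$.} Note that $213$ is not obtained from $132$ by a symmetry that fixes $112$, so it needs its own argument. An occurrence consists of a middle letter $b$ in row $i$, then a smaller letter $a$ in row $l>i$, then a larger letter $c$ in row $j<i$.
\begin{itemize}
\item The letter $c$ is a non-first copy, so it precedes $T_{i,2}$. Therefore $b$ must be $E_i$.
\item All non-first copies of row $l>i$ come after $T_{j,k}$. Therefore $a$ must be a first copy $E_l$, and it suffices to take $l = i+1$.
\end{itemize}
So $213$ occurs if and only if some $E_{i+1}$ is placed before row $i-1$ (or an earlier row) is complete. After shifting the index, this is again the negation of $(\ast)$.

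\emph{Counting.} Given $(\ast)$, I build the word inductively. Before $E_{i+1}$ is placed, rows $1,\dots,i-1$ must be complete and $E_i$ must already be placed. The only freedom is the number $m \in \{0,1,\dots,k-1\}$ of non-first copies of row $i$ placed before $E_{i+1}$. Any such $m$ is compatible with the standard-tableau (lattice) condition, because row $i$ already has its first entry. It is also compatible with $T_{i-1,k} < T_{i,2}$, because row $i-1$ is complete. These $n-1$ choices are independent and determine the word, since everything after $E_n$ is forced. This gives $k^{n-1}$.

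The main obstacle is the case analysis showing that only the first-column entries can play the roles of the $1$ in $132$ and of the $2$ and $1$ in $213$. The step to check most carefully is that the condition $T_{j,k} < T_{i,2}$ from Theorem \ref{thm: 112} pins down all the relative positions used there.
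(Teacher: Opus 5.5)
Your proposal is correct and follows essentially the same route as the paper: your condition $(\ast)$ is exactly the paper's condition $\pi_{i-2,k} < \pi_{i,1}$ for $3 \le i \le n$, which the paper also shows is equivalent to avoiding $\tau$ within $\C_n^k(112)$. Likewise, your choice of $m \in \{0,\dots,k-1\}$ is the paper's choice of where $\pi_{i+1,1}$ sits among $\pi_{i,2} < \cdots < \pi_{i,k}$ in its block decomposition; the only difference is presentational, since you derive the equivalence by determining which copies can play each role in an occurrence, whereas the paper exhibits explicit occurrences for necessity and uses a short row-index inequality chain for sufficiency.
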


\begin{proof}
    
    Since $\pi \in \mathcal{C}_n^k(112)$, we showed in Theorem \ref{thm: 112} that $\sigma_\pi = \delta_n$. 
    
    Now we claim that for $n \ge 1$ and $k \ge 2$, for $\pi \in \C_n^k(112)$, $\pi$ avoids $\tau$ if and only if $\pi_{i-2,k} < \pi_{i,1}$ for $3 \le i \le n$. If not, suppose $\pi_{i-2,k} > \pi_{i,1}$, then $\pi_{i,1} < \pi_{i-2,k} < \pi_{i-1,k}$ which means $\pi$ contains $(\delta_n)_i (\delta_n)_{i-2}(\delta_n)_{i-1}$ which is a $132$ pattern. We also have $\pi_{i-1,1} < \pi_{i,1} < \pi_{i-2,k}$, which means $\pi$ contains $(\delta_n)_{i-1} (\delta_n)_{i} (\delta_n)_{i-2}$ which is a $213$ pattern. 
    
    Conversely, suppose $\pi_{i-2,k} < \pi_{i,1}$ for $3 \le i \le n$, and for sake of contradiction assume we have an occurrence of $\tau$ in $\pi$. Say the occurrence corresponds to positions $\pi_{r_1,c_1} < \pi_{r_2,c_2} < \pi_{r_3,c_3}$ in $\pi$. Since $\sigma_\pi = \delta_n$, if $\tau = 132$ then $r_2<r_3<r_1$ and if $\tau = 213$ we have $r_3< r_1 < r_2$. In the first case, $\pi_{r_2,c_2} > \pi_{r_1,c_1} \ge \pi_{r_1,1} \ge \pi_{r_2+2,1}$ since $r_2 < r_3 < r_1$. In the second case, $\pi_{r_3,c_3} > \pi_{r_2,c_2} \ge \pi_{r_2,1} \ge \pi_{r_3+2,1}$ by the same reasoning. In either case, we get a contradiction. 

    Now, in combination with the requirements on the tableau for $\pi$ to avoid $112$, this implies we have
    \[
    \pi_{1,1} < \{\pi_{2,1}, \pi_{1,2},\dots,\pi_{1,k}\} < \{\pi_{3,1},\pi_{2,2},\dots,\pi_{2,k}\}< \cdots < \{\pi_{n,1},\pi_{n-1,2},\dots,\pi_{n-1,k}\} < \{\pi_{n,2},\dots,\pi_{n,k}\}.
    \]
    In terms of the tableau, this means the colored squares consist of consecutive integers that increase from top to bottom:
     \begin{center}
\ytableausetup{centertableaux}

   \begin{ytableau}
       1 &{\color{red} \ast} &\cdots &{\color{red} \ast}\\
       {\color{red} \ast} &{\color{blue} \ast} &\cdots&{\color{blue} \ast}\\
       {\color{blue} \ast} &&\cdots&nk
   \end{ytableau}
   \end{center}
    There are $k$ ways to arrange the entries in the positions corresponding to all but the first and last subset of positions. Indeed, they correspond to placing consecutive entries in the skew tableau

   \begin{center}
\ytableausetup{centertableaux}
   \begin{ytableau}
       \none & &\cdots & \\
       {}
   \end{ytableau}
   \end{center}
   where the top row has $k-1$ boxes. There is only one way to arrange the entries in positions $\pi_{n,2},\dots,\pi_{n,k}$ since we must have $\pi_{n,2} < \cdots < \pi_{n,k}$. Any such tableau paired with the decreasing permutation gives a $112$ and $\tau$ avoiding canon permutation. This completes the proof.
\end{proof}

\begin{example}
    Consider $\pi = 434433212211 \in \C_4^3(112,132)$. This has
    \[
    (\sigma_\pi,T_\pi) = 
    \begin{ytableau}
        \none[4] &1&{\color{red}3}&{\color{red} 4}\\
        \none[3] &{\color{red} 2}&5&6\\
        \none[2] &7&9&10\\
        \none[1] &8&11&12
    \end{ytableau}
    \]
    Notice if we were to rearrange the red numbers in any allowable way, it would still result in a $112$ and $132$ avoiding canon permutation. For example, if we swap $2$ and $3$,
    \[
    \begin{ytableau}
        \none[4] &1&{\color{red}2}&{\color{red} 4}\\
        \none[3] &{\color{red} 3}&5&6\\
        \none[2] &7&9&10\\
        \none[1] &8&11&12
    \end{ytableau}
    \]
    corresponds to the permutation $443433212211 \in \C_4^3(112,132)$.
\end{example}

Notice that Theorem \ref{thm: 112-132} also implies that avoiding $112$ and $132$ is equivalent to avoiding $112$ and $213$ from which we deduce the following corollary,

\begin{corollary}
    For $n \ge 1$ and $k \ge 2$, we have
    \[
    c_{n}^k(112,132,213) = k^{n-1}
    \]
\end{corollary}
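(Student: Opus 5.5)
The corollary follows from the characterization established inside the proof of Theorem \ref{thm: 112-132}, so I would prove it by showing $\C_n^k(112,132) = \C_n^k(112,213) = \C_n^k(112,132,213)$. Since avoiding three patterns means avoiding each of them, $\C_n^k(112,132,213) \subseteq \C_n^k(112,132)$ is immediate. For the reverse inclusion, I would use the following fact from the proof of Theorem \ref{thm: 112-132}. For $\pi \in \C_n^k(112)$, so that $\sigma_\pi = \delta_n$ by Theorem \ref{thm: 112}, and for either $\tau \in \{132,213\}$, we have that $\pi$ avoids $\tau$ if and only if $\pi_{i-2,k} < \pi_{i,1}$ for all $3 \le i \le n$.

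The key observation is that this tableau condition does not depend on which $\tau \in \{132,213\}$ is chosen. Take $\pi \in \C_n^k(112,132)$. Applying the equivalence with $\tau = 132$ shows that $T_\pi$ satisfies $\pi_{i-2,k} < \pi_{i,1}$ for $3 \le i \le n$. Applying it again with $\tau = 213$ shows that $\pi$ avoids $213$. Hence $\pi \in \C_n^k(112,132,213)$.

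The three sets therefore coincide, and Theorem \ref{thm: 112-132} gives $c_n^k(112,132,213) = c_n^k(112,132) = k^{n-1}$.

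The only point needing care is to confirm that the proof of Theorem \ref{thm: 112-132} really proves both directions of the equivalence separately for each $\tau$. It does. In the forward direction, a failure $\pi_{i-2,k} > \pi_{i,1}$ produces a $132$ occurrence and a $213$ occurrence simultaneously. In the converse direction, the case analysis handles $132$ and $213$ separately. So the argument is a direct bookkeeping consequence of that proof, and I do not expect any real obstacle.
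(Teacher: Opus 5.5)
Your proposal is correct and follows the paper's own argument. Both rest on the fact, established in the proof of Theorem \ref{thm: 112-132}, that for $\pi \in \C_n^k(112)$ avoiding $132$ and avoiding $213$ are each equivalent to the same tableau condition $\pi_{i-2,k} < \pi_{i,1}$ for $3 \le i \le n$, so $\C_n^k(112,132) = \C_n^k(112,132,213)$; your check that both directions of the equivalence hold separately for each pattern is the bookkeeping the paper leaves implicit.
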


\begin{proof}
    The proof of Theorem \ref{thm: 112-132} shows that the canon permutations we produced to avoid $112$ and $132$ necessarily also avoid $213$. This is largely the content of the first two paragraphs of the proof.
\end{proof}

Interestingly, the canon permutations avoiding $\{112,312\}$ and those avoiding $\{112,231\}$ are equinumerous, but unlike the previous case they correspond to distinct tableaux.

\begin{theorem} \label{thm: 112-312}
    For $n \geq 2$, $k \geq 2$ and $\tau \in \{312,231\}$, we have
    \[
    c_n^k(112,\tau) = k.
    \]
\end{theorem}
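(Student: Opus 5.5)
The plan is to handle $\tau = 312$ and $\tau = 231$ separately. The reverse--complement of $312$ is $231$, but the reverse--complement of $112$ is $221$, so Proposition \ref{prop: reverse-complement} does not transfer one case to the other. In both cases I start from the structure found in the proof of Theorem \ref{thm: 112}:
\begin{itemize}
\item $\sigma_\pi = \delta_n$, so row $r$ of $T_\pi$ carries the letter $n-r+1$ and a smaller row index means a larger letter;
\item $\pi_{i,k} < \pi_{i+1,2}$ for all $i$;
\item conversely, every standard tableau satisfying these inequalities gives a $112$-avoiding canon permutation.
\end{itemize}
The task is then to translate $\tau$-avoidance into further inequalities on $T_\pi$ and count the tableaux that survive. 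As a sanity check, for $n=2$ no pattern of length $3$ with three distinct letters can occur, so the count is $c_2^k(112) = \C_{k,2} = \frac12\binom{2k}{1} = k$, which agrees with the claim.

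\emph{Case $\tau = 312$.} An occurrence consists of positions $\pi_{r_1,c_1} < \pi_{r_2,c_2} < \pi_{r_3,c_3}$ with rows $r_1 < r_3 < r_2$. To decide whether one exists, it is enough to take $c_1 = 1$ and $c_3 = k$ and ask whether some entry of row $r_2$ lies strictly between $\pi_{r_1,1}$ and $\pi_{r_3,k}$. Since $r_1<r_2$ and $T_\pi$ is standard, $\pi_{r_2,1} > \pi_{r_1,1}$. So avoidance is equivalent to $\pi_{r_2,1} > \pi_{r_3,k}$ whenever $2 \le r_3 < r_2$, where we take $r_1 = 1$. In words, rows $2,\dots,n$ occupy consecutive blocks of positions, in order.

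Combining this with $\pi_{1,k} < \pi_{2,2}$ forces the following:
\begin{itemize}
\item $\pi_{1,1} = 1$;
\item the entries $\pi_{1,2} < \cdots < \pi_{1,k}$ are interleaved with the single entry $\pi_{2,1}$;
\item after that, everything is determined.
\end{itemize}
The entry $\pi_{2,1}$ can sit in any of $k$ slots: immediately after $\pi_{1,1}$, or immediately after $\pi_{1,j}$ for some $2 \le j \le k$. Each such tableau satisfies the $112$ conditions. The converse direction, that these $k$ tableaux really avoid $312$, follows from the reduction above, which was an equivalence.

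\emph{Case $\tau = 231$.} Now an occurrence consists of positions $\pi_{r_1,c_1} < \pi_{r_2,c_2} < \pi_{r_3,c_3}$ with rows $r_2 < r_1 < r_3$. Again I take $c_1 = 1$ and $c_3 = k$. Because $r_2 < r_3$, standardness gives $\pi_{r_2,c} < \pi_{r_3,c}$ in each column $c$, and in particular $\pi_{r_2,k} < \pi_{r_3,k}$. Hence every entry of row $r_2$ that lies after $\pi_{r_1,1}$ also lies before $\pi_{r_3,k}$ and completes an occurrence. Avoidance is therefore equivalent to $\pi_{r_2,k} < \pi_{r_1,1}$ for all $r_2 < r_1 \le n-1$, since some $r_3 > r_1$ must exist.

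So rows $1,\dots,n-1$ are sequential blocks. Together with $\pi_{n-1,k} < \pi_{n,2}$, the only freedom is where $\pi_{n-1,k}$ sits relative to $\pi_{n,1} < \pi_{n,2} < \cdots$. It must exceed $\pi_{n-1,k-1}$, which lies before $\pi_{n,1}$ by the block structure, and precede $\pi_{n,2}$. Equivalently, $\pi_{n,1}$ is interleaved into the chain $\pi_{n-1,1}<\dots<\pi_{n-1,k}$ anywhere after $\pi_{n-1,1}$. This again gives $k$ choices.

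The main obstacle I expect is making the reduction to "extremal columns" airtight in each case. One must check that the existence of an occurrence with arbitrary columns is equivalent to the stated single-inequality condition in both directions. For $231$, one must also confirm that the constraint applies exactly to the rows $r_1 \le n-1$. Beyond that, the remaining work is the routine check that the $k$ resulting tableaux are standard and satisfy the $112$ conditions.
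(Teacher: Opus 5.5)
Your argument is correct and takes the same route as the paper. From $112$-avoidance you get $\sigma_\pi=\delta_n$ and $\pi_{i,k}<\pi_{i+1,2}$. The pattern $\tau$ then becomes the block condition $\pi_{i,1}>\pi_{i-1,k}$, for $3\le i\le n$ when $\tau=312$ and for $2\le i\le n-1$ when $\tau=231$. The count is then the $k$ positions of the single free entry, $\pi_{2,1}$ or $\pi_{n,1}$ respectively. Your reduction to $c_1=1$, $c_3=k$ also proves the sufficiency direction, which the paper only calls clear.

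One passage in the $231$ case is false and should be deleted. You say the only freedom is ``where $\pi_{n-1,k}$ sits relative to $\pi_{n,1}<\pi_{n,2}<\cdots$'' and that $\pi_{n-1,k-1}$ precedes $\pi_{n,1}$ ``by the block structure''. That structure only orders rows $1,\dots,n-1$ among themselves and says nothing about where $\pi_{n,1}$ falls inside row $n-1$. For example, with $n=k=3$, the tableau with rows $1\,2\,3$, $4\,6\,7$, $5\,8\,9$ gives $\pi=333212211\in\mathcal{C}_3^3(112,231)$, yet $\pi_{3,1}=5<6=\pi_{2,2}$. Read literally, your first description allows only two configurations when $k\ge 3$ ($\pi_{n,1}$ just before or just after $\pi_{n-1,k}$), so it is not equivalent to the sentence that follows. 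Your ``Equivalently'' sentence, which places $\pi_{n,1}$ anywhere after $\pi_{n-1,1}$ in the chain $\pi_{n-1,1}<\dots<\pi_{n-1,k}$, is the correct characterization and is what gives the $k$ choices.

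A minor slip as well: $112^{rc}=122$, not $221$ (which is $112^c$). Your conclusion that reverse and complement do not carry one case to the other is still right.
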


\begin{proof}
    Since $\pi \in \mathcal{C}_n^k(112)$, we know $\sigma_\pi = \delta_n$. So it remains to characterize the corresponding tableau. To avoid $312$, we must have $\pi_{i,1} > \pi_{i-1,k}$ for $3 \leq i \leq n$. Stated another way, all entries in row $i$ must be larger than all entries in row $i-1$. If not, then $\pi_{i-2,1} < \pi_{i,1} < \pi_{i-1,k}$, which means $\pi$ contains the subsequence $(\delta_n)_{i-2}(\delta_n)_i (\delta_n)_{i-1}$ which is a $312$ pattern. Similarly, to avoid $231$ we must have $\pi_{i,1} > \pi_{i-1,k}$ but now for $2 \le i \le n-1$. If not, then $\pi_{i,1} < \pi_{i-1,k} < \pi_{i+1,k}$ which means $\pi$ contains the subsequence $(\delta_n)_{i} (\delta_n)_{i-1} (\delta_n)_{i+1}$ which is a $231$ pattern.

    First consider the case where $\tau = 312$. Since $\pi_{1,1} = 1$, the above in combination with the fact that $\pi_{1,2} < \pi_{1,3} < \cdots < \pi_{1,k} < \pi_{2,2} < \cdots < \pi_{2,k}$ from Theorem \ref{thm: 112} means once we select $\pi_{2,1}$ from the entries $\{2,\dots,k+1\}$, this completely determines $\pi$. Since there are $k$ choices for $\pi_{2,1}$ this completes the proof.

    If instead $\tau = 231$, a similar argument applies with the roles of the first and last rows exchanged. The inequality $\pi_{i,1} > \pi_{i-1,k}$ for $2 \le i \le n-1$ says that every entry of row $i$ exceeds every entry of row $i-1$ for $2 \le i \le n-1$. So rows $1,\dots,n-2$ are just the consecutive blocks $\{1,\dots,k\}, \dots, \{(n-3)k+1,\dots,(n-2)k\}$ and are completely determined, and $\pi_{n-1,1} = (n-2)k+1$. By Theorem \ref{thm: 112} the remaining entries satisfy $\pi_{n-1,2} < \pi_{n-1,3} < \cdots < \pi_{n-1,k} < \pi_{n,2} < \cdots < \pi_{n,k}$, so the only freedom we have is choosing $\pi_{n,1}$, which must exceed $\pi_{n-1,1}$ and precede $\pi_{n,2}$. Again there are $k$ choices, each determining $\pi$.

    It is also clear that all of the tableaux produced in this way correspond to $\pi$ that avoid $312$ in the first case or $231$ in the second case.
\end{proof}

\begin{example}
    Let $n = 4$ and $k = 3$. Suppose we pick $\pi_{2,1} = 3$. This uniquely produces
    \[
    T_\pi = \begin{ytableau}
        \none[4] &1&2&4\\
        \none[3] &{\bf 3}&5&6\\
        \none[2] &7&8&9\\
        \none[1] &10&11&12
    \end{ytableau}
    \]
    so that $\pi = 443433222111 \in \C_4^3(112,312)$.
\end{example}

\begin{remark} \label{rem: 321}
    For completion's sake, we note that $c_n^k(112,321) = 0$ for $n \geq 3$ because $\sigma_\pi = \delta_n$ necessarily, so as soon as we have at least $3$ rows, we must have a $321$ pattern. We also note that neither case from Theorems \ref{thm: 112-132} or \ref{thm: 112-312} follows from the other by Proposition \ref{prop: reverse-complement}. Indeed, the orbit of $\{112,132\}$ under reverse and complement is $\{211,231\}$, $\{221,312\}$ and $\{122,213\}$, so $\{112,213\}$ and $\{112,231\}$ are genuinely separate computations. Similarly, the orbit of $\{112,312\}$ under reverse and complement is $\{211,213\},\{221,132\},\{122,231\}$ which does not contain $\{112,231\}$.
\end{remark}

This motivates the following definition,

\begin{definition} \label{def: canon-wilf}
    Let $k \ge 2$. We say that two sets of patterns $\Lambda_1, \Lambda_2 \subseteq \mathcal{P}$ are \emph{$k$-canon-Wilf equivalent} if $c_n^k(\Lambda_1) = c_n^k(\Lambda_2)$ for all $n \ge 1$. If $\Lambda_1$ and $\Lambda_2$ are $k$-canon-Wilf equivalent for every $k \ge 2$, we say they are \emph{canon-Wilf equivalent}.
\end{definition}

Analogous notions of Wilf equivalence have been studied for patterns in rooted labeled forests \cite{R24}, for classical and consecutive patterns in rooted forests \cite{GP} and for arrow patterns in \cite{AL26}. With this definition, we can restate some of the above results,

\begin{corollary}
    The following sets of patterns are canon-Wilf equivalent:
    \begin{itemize}
        \item $\{112,132\}$, $\{112,213\}$ and $\{112,132,213\}$,
        \item $\{112,312\}$, $\{112,231\}$ and $\{112,132,231\}$,
        \item $\Lambda$, $\Lambda^r$, $\Lambda^c$ and $\Lambda^{rc}$, for any set of patterns $\Lambda$.
    \end{itemize}
\end{corollary}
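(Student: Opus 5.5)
The statement has three parts, and I would prove each one directly from results already established, matching counts for every $n \ge 1$ and $k \ge 2$. The only genuinely new count is the one for $\{112,132,231\}$.

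\emph{First item.} Theorem \ref{thm: 112-132} gives $c_n^k(112,132) = c_n^k(112,213) = k^{n-1}$. The corollary following it gives $c_n^k(112,132,213) = k^{n-1}$. This holds for all $n \ge 1$ and $k \ge 2$, which is exactly the definition of canon-Wilf equivalence.

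\emph{Second item.} For $n \ge 2$, Theorem \ref{thm: 112-312} gives $c_n^k(112,312) = c_n^k(112,231) = k$. For $n = 1$ the unique canon permutation $1^k$ avoids every pattern containing two distinct letters, so all counts equal $1$.

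It remains to show $c_n^k(112,132,231) = k$ for $n \ge 2$; I expect this to be the main step. I would intersect the two tableau characterizations already proved.
\begin{enumerate}
\item By the proof of Theorem \ref{thm: 112-132}, avoiding $\{112,132\}$ is equivalent to $\sigma_\pi = \delta_n$ together with the chain
\[
\pi_{1,1} < \{\pi_{2,1},\pi_{1,2},\dots,\pi_{1,k}\} < \cdots < \{\pi_{n,1},\pi_{n-1,2},\dots,\pi_{n-1,k}\} < \{\pi_{n,2},\dots,\pi_{n,k}\}.
\]
\item By the proof of Theorem \ref{thm: 112-312}, among $112$-avoiders, avoiding $231$ is equivalent to $\pi_{i,1} > \pi_{i-1,k}$ for $2 \le i \le n-1$. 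This equivalence needs both directions, and the sufficiency direction is the part of that proof asserted as "clear".
\item For each $2 \le i \le n-1$, the condition in step 2 forces $\pi_{i,1}$ to be the largest element of the block $\{\pi_{i,1},\pi_{i-1,2},\dots,\pi_{i-1,k}\}$. Since $\pi_{i-1,2}<\cdots<\pi_{i-1,k}$, that block is then completely determined.
\item The final mixed block $\{\pi_{n,1},\pi_{n-1,2},\dots,\pi_{n-1,k}\}$ is unconstrained by step 2. As in Theorem \ref{thm: 112-132}, it can be filled in exactly $k$ ways, one for each position of $\pi_{n,1}$ among the $k$ consecutive values.
\end{enumerate}
Each such tableau satisfies both characterizations, so it avoids $112$, $132$ and $231$. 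This gives $c_n^k(112,132,231) = k$.

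\emph{Third item.} I would use Proposition \ref{prop: reverse-complement}. The maps $\pi \mapsto \pi^r$ and $\pi \mapsto \pi^c$ are involutions on $\C_n^k$. For any pattern $\sigma$, the word $\pi$ contains $\sigma$ if and only if $\pi^r$ contains $\sigma^r$. Likewise $\pi$ contains $\sigma$ if and only if $\pi^c$ contains $\sigma^c$, where $\sigma^c$ is complemented within its own alphabet $[m]$; this is legitimate because complementation reverses relative order. Hence reversal restricts to a bijection $\C_n^k(\Lambda) \to \C_n^k(\Lambda^r)$, and complementation to a bijection $\C_n^k(\Lambda) \to \C_n^k(\Lambda^c)$. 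Composing the two handles $\Lambda^{rc}$.
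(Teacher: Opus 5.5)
Your proof is correct. The first and third items go as in the paper; you just make the reversal/complement bijections and the trivial case $n=1$ explicit. For $\{112,132,231\}$, however, you take a different route.

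\textbf{Your route.} You intersect the two tableau characterizations and recount. The mixed blocks $\{\pi_{i,1},\pi_{i-1,2},\dots,\pi_{i-1,k}\}$ are forced for $2\le i\le n-1$, and only the last block has $k$ fillings.

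\textbf{The paper's route.} The paper proves a containment instead. The $231$ criterion applied at $i-1$ gives $\pi_{i-2,k}<\pi_{i-1,1}<\pi_{i,1}$ for $3\le i\le n$. This is exactly the $132$ criterion from Theorem~\ref{thm: 112-132}, so $\C_n^k(112,231)\subseteq\C_n^k(132)$. Hence $\C_n^k(112,132,231)=\C_n^k(112,231)$ as sets, and the count $k$ is quoted from Theorem~\ref{thm: 112-312}.

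\textbf{Comparison.} The paper's version is shorter and gives the set identity directly, with no new counting. Yours re-derives the count from the characterizations, so it needs both directions of the $231$ criterion. The sufficiency half you flagged is a one-liner: an occurrence of $231$ must use rows $r_2<r_1<r_3$, and then $\pi_{r_1,c_1}\ge\pi_{r_1,1}>\pi_{r_1-1,k}\ge\pi_{r_2,c_2}$, a contradiction. In exchange, your route gives an explicit description of the class. Your $k$ tableaux are exactly those of the $231$ case of Theorem~\ref{thm: 112-312}, so you implicitly recover the same set identity.
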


\begin{proof}
    For the third set in the second item, note that the $231$ condition of Theorem \ref{thm: 112-312} applied at $i-1$ gives $\pi_{i-2,k} < \pi_{i-1,1} \le \pi_{i,1}$, so $\C_n^k(112,231) \subseteq \C_n^k(132)$. This means adding $132$ to the avoidance set changes nothing. All of the other results are immediate corollaries of Theorems \ref{thm: 112-132}, \ref{thm: 112-312} and Proposition \ref{prop: reverse-complement}.
\end{proof}

\begin{remark}
    By contrast $\{112,312,231\}$ is not equivalent to any of the above. Indeed, the two conditions together force every row of $T_\pi$ to lie entirely before the next, so the tableau is unique and $c_n^k(112,312,231) = 1$ for $n \ge 3$.
\end{remark}

We again note that this handles all the pairs $c_n^k(\sigma,\tau)$ where $\sigma \in \{112,122,211,221\}$ and $\tau \in \S_3 \cup \{\varnothing\}$ by taking reverses and complements. For example, since $c_n^k(112,132) = k^{n-1}$ this is also true of $c_n^k(221,312)$ by taking the complement, and $c_n^k(211,231)$ by taking the reverse, and $c_n^k(122,213)$ by taking the reverse complement.

\section{Connection to $k$-ary Trees} \label{sec: trees}

A $k$-ary tree is a rooted ordered tree in which each vertex has $k$ or $0$ children. In Theorem \ref{thm: 112} we showed that $c_n^k(112)$ has the same enumeration as the number of $k$-ary trees with $n$ internal vertices. In this section, we explore this connection a little further. First, we present an explicit bijection between $\mathcal{C}_n^k(112)$ and $k$-ary trees on $n$ internal vertices. Then, we explore how avoidance in the canon permutations interacts with avoidance in the $k$-ary trees as defined in \cite{GPPT11}.

Given a permutation $\pi \in \C_n^k(112)$, form a $k$-ary tree via the following procedure. First create a root vertex with label $0$. Let $R = 0$, this will record the label of the most recent vertex, and $C = 1$, this will record the label of the vertex we are placing.
\begin{itemize}
    \item If $C$ occurs in the first column of the tableau $T_\pi$, create a child of $R$ and label it $C$. Set $R = C$ and $C = C+1$ and iterate.
    \item If $C$ does not occur in the first column of $T_\pi$ and the parent of $R$ has less than $k$ children, create another child of the parent of $R$ to the right of all the current children, label it $C$. Set $R = C$ and $C = C+1$ and iterate.
    \item If $C$ does not occur in the first column of $T_\pi$ and the parent of $R$ has $k$ children, set $R$ equal to the parent of the current $R$, keep $C$ the same, and iterate.
\end{itemize} 

For example,    

\begin{center}
\ytableausetup{centertableaux}

   \begin{ytableau}
       \none[3] &1 &3 &5\\
       \none[2] &2&6&7\\
       \none[1] &4&8&9
   \end{ytableau}
\qquad corresponds to \qquad
\begin{forest}
    for tree={circle, draw, minimum size = 1em, inner sep = 1pt, font = \tiny}
    [0,circle,draw
    [1
        [2]
        [3
            [4] [5] [6]]      
        [7]]
    [8]
    [9]]
\end{forest}
\end{center}

The tree itself is not labeled. The labels record which entry of the tableau creates each vertex. The procedure is well defined. A straightforward induction shows that at every stage the vertices with fewer than $k$ children are exactly the ancestors of $R$, since each new vertex is added as the rightmost child of an ancestor of the previous one. Moreover, suppose $C$ does not occur in the first column and let $m = |\{i \; | \; \pi_{i,1} \le C\}|$, so that the tree currently has $m$ internal vertices, hence $km$ available slots, and $C-1$ non-root vertices. The lattice path condition of Theorem \ref{thm: 112} says that among the first $C$ entries, the number lying outside the first column is at most $(k-1)m$. This means $C \le km$, so $C - 1 < km$ and some ancestor of $R$ has a free slot. To invert the map, label the non-root vertices $1,\dots,nk$ in depth first order. The first column of $T_\pi$ consists of exactly those labels that are leftmost children of their parent.

Notice that $\{\pi_{1,1}-1, \pi_{2,1}-1,\dots,\pi_{n,1}-1\}$ are precisely the internal vertices of the corresponding tree, so this construction does produce trees with $n$ internal vertices.

We can recover some results of Gabriel, Peske, Pudwell and Tay \cite{GPPT11} from this perspective. They study avoidance in $k$-ary trees, with a specific focus on ternary trees. We can reframe their avoidance questions in our context. For them, an ordered rooted tree $T$ contains $t$ as a tree pattern if $t$ is a contiguous, rooted, and ordered subtree of $T$. $T$ avoids $t$ if there is no such subtree of $T$.

They define specific families of subtrees and study avoidance of those families. One such family is $t_{{(2k-1)}_1}$, which looks like

\[
\begin{forest}
    for tree={circle, draw, minimum size = 2em, inner sep = 1pt, font = \tiny}
    [,circle,draw
    [
        [1]
        [$\cdots$]
        [k]]
    [k+1]
    [$\cdots$]
    [2k-1]]
\end{forest}
\]

where this is the $k$-ary tree with two internal vertices. Since the leftmost child of the root is the internal one, it has $2k-1$ leaves. When $k = 3$ it is the tree denoted $t_{5_1}$ in \cite{GPPT11}. We can use canon permutations to generalize their result:

\begin{theorem}
    For $n \ge1$ and $k \ge 2$, the number of $k$-ary trees on $n$ internal vertices avoiding $t_{(2k-1)_1}$ is $\C_{k-1,n}$, the number of $(k-1)$-ary trees on $n$ internal vertices.
\end{theorem}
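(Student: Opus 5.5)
The plan is to reduce the statement to a structural bijection on trees and then count with Theorem \ref{thm: 112}. I will also check the result against the canon-permutation/lattice-path description.

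\textbf{Step 1: reformulate avoidance.} I first unpack what it means for a $k$-ary tree $T$ to contain $t_{(2k-1)_1}$ in the sense of \cite{GPPT11}. A contiguous rooted ordered copy of $t_{(2k-1)_1}$ is a vertex $u$ of $T$ with $k$ children whose leftmost child $v$ also has $k$ children. The leaves of the pattern may be sent to any vertices of $T$, internal or not. Hence $T$ avoids $t_{(2k-1)_1}$ if and only if no internal vertex of $T$ has an internal leftmost child. Equivalently, every internal vertex has a leaf as its leftmost child. I expect this step to be the main point requiring care: the whole argument depends on pinning down the containment convention of \cite{GPPT11}, namely that a pattern leaf may map onto an internal vertex. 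I will double-check this against their small cases, for instance the $k=3$ count for $t_{5_1}$.

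\textbf{Step 2: the bijection.} Given an avoiding $k$-ary tree on $n$ internal vertices, delete the leftmost child of every internal vertex. By Step 1 each deleted vertex is a leaf, so nothing else is removed. Every internal vertex is left with exactly $k-1$ children, and leaves remain leaves. This produces a $(k-1)$-ary tree on the same $n$ internal vertices. The inverse map attaches a new leftmost leaf to every internal vertex of a $(k-1)$-ary tree. The result is a $k$-ary tree in which every leftmost child is a leaf, so it avoids $t_{(2k-1)_1}$ by Step 1. The two maps are clearly mutually inverse.

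\textbf{Step 3: count and degenerate case.} For $k\ge 3$, Theorem \ref{thm: 112} applied with $k-1\ge 2$ (through the bijection between $\C_n^{k-1}(112)$ and $(k-1)$-ary trees above) gives the count $\C_{k-1,n}$. For $k=2$ the formula reads $\C_{1,n}=\frac1n\binom{n}{n-1}=1$. Directly, a binary tree with no internal left child is a right comb, which is unique, and a $1$-ary tree on $n$ internal vertices is a path, also unique. So both sides agree.

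\textbf{Canon-permutation check.} To phrase the result in the canon language, I would translate Step 1 through the tree construction above. A vertex labelled $v$ is internal exactly when $v+1$ lies in the first column of $T_\pi$. A vertex is a leftmost child exactly when its label lies in the first column. The root $0$ is internal, since $1$ always lies in the first column. Avoidance therefore says that no two consecutive labels $C,C+1\ge 1$ both lie in the first column, and that $2$ is not in the first column. In the lattice-path encoding of Theorem \ref{thm: 112}, these are paths with no two consecutive $E$ steps. Deleting the $N$ step that immediately follows each $E$ should give the $(k-1)$-version of the same paths. This gives a second proof of the same count and serves as a consistency check on Step 1.
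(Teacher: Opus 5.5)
Your argument is correct, and its core takes a different route from the paper's. Your Step 1 characterization of avoidance (no internal vertex has an internal leftmost child) is exactly the one the paper uses. After that, you stay purely with trees: you delete the leftmost leaf of every internal vertex to get a $(k-1)$-ary tree, and then quote the count of $(k-1)$-ary trees.

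The paper instead pushes the tree through its bijection with $\C_n^k(112)$. Vertex $C$ is internal exactly when $C+1$ lies in the first column of $T_\pi$. So avoidance becomes ``no two consecutive integers in the first column,'' i.e.\ no $EE$ in the lattice path. The paper then deletes the $N$ step forced after each $E$, obtaining paths weakly below $y=(k-2)x$, which are counted by $\C_{k-1,n}$ as in the proof of Theorem~\ref{thm: 112}.

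Under the correspondence of Section~\ref{sec: trees} these are the same map. In preorder, removing a leftmost leaf and promoting its sibling to leftmost child replaces the consecutive steps $EN$ by a single $E$, which is exactly deleting the forced $N$.

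Your version is more elementary and self-contained. It needs neither the well-definedness of the tableau-to-tree construction nor the check that the forced $N$ steps exist and that deleting them preserves the region condition. It also makes transparent why $(k-1)$-ary trees appear. The paper's version serves the section's purpose: it translates a tree-pattern condition into a local condition on the first column of $T_\pi$, a dictionary one might hope to reuse for other tree patterns.

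Your closing canon-permutation check is precisely the paper's proof. The extra requirement that $2$ not lie in the first column is redundant: it is just the case $C=1$ of the consecutive-labels condition, since $1$ always lies there.
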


\begin{proof}
    Using our bijection, we know any $k$-ary tree on $n$ internal vertices corresponds to some $\pi \in \C_n^k(112)$. Since every internal vertex of a $k$-ary tree has exactly $k$ children, a tree contains $t_{(2k-1)_1}$ if and only if it has an internal vertex whose leftmost child is also internal. In the construction above, the vertex labeled $C$ receives a child exactly when $C+1$ occurs in the first column of $T_\pi$, and in that case $C+1$ is its leftmost child. So $C$ is internal if and only if $C+1$ lies in the first column. Hence the tree contains $t_{(2k-1)_1}$ if and only if two consecutive integers $a$, $a+1$ both lie in the first column of $T_\pi$, that is, if and only if the first column has a consecutive ascent. This is the same as the corresponding lattice path never containing a consecutive $EE$.

    Such permutations are in bijection with lattice paths with $n$ east steps and $n(k-2)$ north steps that lie weakly below the line $y = (k-2)x$. To see this, in the lattice path corresponding to $\pi$, which has $n$ east steps and $n(k-1)$ north steps and lies weakly below the line $y = (k-1)x$, remove the $N$ step that must occur directly after each $E$ step. This removes $n$ north steps and produces another valid lattice path that will now lie weakly below the line $y = (k-2)x$ instead of $y = (k-1)x$. This construction is clearly reversible, we could add an additional $N$ step after each $E$ step. We conclude that the number of $k$-ary trees on $n$ internal vertices avoiding $t_{(2k-1)_1}$ is the number of $(k-1)$-ary trees on $n$ internal vertices.
\end{proof}

When $k = 3$, this recovers a similar result of Gabriel, Peske, Pudwell and Tay (\cite[Section 2.1]{GPPT11}). Their result is stated in terms of the number of ternary trees with $m$ leaves that avoid $t_{5_1}$, our equivalent restatement considers the number of internal vertices instead. This avoids the sequence being interspersed with $0$'s because there are no ternary trees with an even number of leaves:

\begin{corollary}
    The number of $3$-ary trees on $n$ internal vertices avoiding $t_{5_1}$ is $\frac{1}{n} \binom{2n}{n-1} = C_n$, the $n$-th Catalan number.
\end{corollary}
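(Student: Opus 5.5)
This is the $k=3$ case of the preceding theorem, which says that $k$-ary trees on $n$ internal vertices avoiding $t_{(2k-1)_1}$ are counted by $\C_{k-1,n}$. Setting $k=3$ gives $t_{(2k-1)_1}=t_{5_1}$, and the count becomes $\C_{2,n}=\frac{1}{n}\binom{2n}{n-1}$. It then remains to identify this number with $C_n$.

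The identity is the standard Catalan manipulation:
\[
\frac{1}{n}\binom{2n}{n-1}=\frac{1}{n}\cdot\frac{(2n)!}{(n-1)!\,(n+1)!}=\frac{(2n)!}{n!\,(n+1)!}=\frac{1}{n+1}\binom{2n}{n}=C_n .
\]
The same conclusion also follows from the introduction's remark that $\C_{2,n}$ is the ordinary Catalan number.

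To make the bijection concrete, I would trace the mechanism of the preceding proof at $k=3$. Ternary trees correspond, via the tree bijection, to elements of $\C_n^3(112)$, and hence to lattice paths from $(0,0)$ to $(n,2n)$ that stay weakly below $y=2x$. Avoiding $t_{5_1}$ means the first column of $T_\pi$ never contains two consecutive integers, that is, the path has no factor $EE$. Deleting the forced $N$ after each $E$ then produces Dyck-type paths from $(0,0)$ to $(n,n)$ that stay weakly below $y=x$, and there are $C_n$ of these.

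There is no real obstacle here. The only point to state explicitly is that the theorem applies for every $k \ge 2$, so in particular for $k=3$, and that no boundary case arises for $n\ge1$.
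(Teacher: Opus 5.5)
Your proposal is correct and matches the paper, which obtains the corollary simply as the $k=3$ case of the preceding theorem on $k$-ary trees avoiding $t_{(2k-1)_1}$. Your factorial check that $\C_{2,n}=\frac{1}{n}\binom{2n}{n-1}=C_n$ is right, and your trace of the lattice-path argument at $k=3$ (no $EE$ factor, then delete the forced $N$ after each $E$) is the paper's mechanism with $k=3$ substituted.
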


To see why these results are equivalent, notice if a $k$-ary rooted ordered tree has $j$ internal vertices, it has $kj + 1$ total vertices since each internal vertex has $k$ children and we add in the root. A tree with $j$ internal vertices therefore has $kj + 1 - j = (k-1)j + 1$ total leaves. When $k = 3$, our result corresponds to the number of $3$-ary trees on $2n+1$ leaves being equal to $\frac{1}{n} \binom{2n}{n-1} = C_n$, which precisely recovers the odd entries from their result.

It would be interesting to further explore this connection and see how avoidance in $k$-ary trees interacts with avoidance in canon permutations.

\section{Avoiding Symmetric Sets} \label{sec: symmetry}

Let $\Lambda$ be a set of patterns on the alphabet $[m]$. For $\sigma \in \S_m$ and a word $\lambda$ on $[m]$, write $\sigma\lambda$ for the word obtained from $\lambda$ by relabeling each letter $i$ as $\sigma(i)$. We say that $\Lambda$ is \textit{symmetric} if
\[
\sigma \cdot \Lambda := \{\sigma\lambda \; | \; \lambda \in \Lambda\} = \Lambda \qquad \text{for every } \sigma \in \S_m,
\]
equivalently, $\Lambda$ is a union of $\S_m$-orbits of words. For example, both the nonnesting and noncrossing avoidance sets $\{1221,2112\}$ and $\{1212,2121\}$ are symmetric. Given a pattern $\lambda$ on the alphabet $[m]$, we write
\[
\O(\lambda) := \S_m \cdot \lambda = \{\sigma\lambda \; | \; \sigma \in \S_m\}
\]
for the $\S_m$-orbit of $\lambda$.  Every $\O(\lambda)$ is symmetric, and a set of patterns is symmetric precisely when it is a union of orbits $\O(\lambda)$.  When $m = 2$ the orbit is just $\O(\lambda) = \{\lambda,\lambda^c\}$. For instance, $\O(1221) = \{1221,2112\}$ and $\O(12^j1) = \{12^j1,21^j2\}$. We consider the sets $\O(1221)$, $\O(1212)$, $\O(12^j1)$, $\O((12)^j)$ and $\O(1^a21^b)$ below.

\begin{remark} \label{rem: orbits}
    Since the complement $i \mapsto m-i+1$ is itself an element of $\S_m$, we have $\O(\lambda)^c = \O(\lambda)$, so every symmetric set is automatically closed under complements.  Reversal, on the other hand, gives $\O(\lambda)^r = \O(\lambda^r)$, so by Proposition \ref{prop: reverse-complement} the sets $\O(\lambda)$ and $\O(\lambda^r)$ are canon-Wilf equivalent.
\end{remark}

Symmetric avoidance in canon permutations is related to avoidance in $k$-regular lattice words. Recall that a \emph{lattice word} $\pi$ on $[n]$ is a word with the property that in every prefix of the word, the number of occurrences of the letter $i$ is greater than or equal to the number of occurrences of the letter $i+1$ for all $1 \leq i \leq n-1$. We let $\mathcal{L}_n^k$ denote the set of $k$-regular lattice words of length $kn$ on the alphabet $[n]$.

\begin{theorem} \label{thm: symmetric}
    If $\Lambda$ is symmetric, then $c_n^k(\Lambda) = n! |\mathcal{L}_{n}^k(\Lambda)|$.
\end{theorem}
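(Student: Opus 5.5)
The plan is to decompose $\C_n^k$ as a disjoint union over $\sigma \in \S_n$ of $\C_n^{k,\sigma}$, identify each $\C_n^{k,\sigma}$ with $\C_n^{k,\mathrm{id}}$ by relabeling, and then identify $\C_n^{k,\mathrm{id}}$ with $\mathcal{L}_n^k$. Symmetry of $\Lambda$ will guarantee that the relabeling preserves avoidance. The result is $c_n^{k,\sigma}(\Lambda) = |\mathcal{L}_n^k(\Lambda)|$ for every $\sigma$, and summing over the $n!$ choices of $\sigma$ gives the theorem.

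\emph{Step 1 (lattice words).} I will first show that $\C_n^{k,\mathrm{id}} = \mathcal{L}_n^k$ as sets of words. If $\pi = (\mathrm{id}, T_\pi)$ with $T_\pi$ standard, then the $j$-th copy of $i+1$ appears after the $j$-th copy of $i$, because $\pi_{i,j} < \pi_{i+1,j}$. Hence in every prefix the count of $i$ is at least the count of $i+1$, so $\pi$ is a lattice word. Conversely, given a $k$-regular lattice word, record in row $i$, column $j$ the position of the $j$-th occurrence of $i$. Rows increase trivially. Columns increase by the lattice condition: if the $j$-th $i+1$ came before the $j$-th $i$, the prefix ending at that $i+1$ would contain $j$ copies of $i+1$ and at most $j-1$ copies of $i$. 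So we obtain a standard tableau of shape $(k^n)$ paired with the identity, that is, a canon permutation in $\C_n^{k,\mathrm{id}}$. This is exactly the Young tableau framework of Section~\ref{sec: background}.

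\emph{Step 2 (relabeling).} For $\sigma \in \S_n$, let $\phi_\sigma$ act on words over $[n]$ by replacing each letter $i$ with $\sigma(i)$. Then $\phi_\sigma$ maps $\C_n^{k,\mathrm{id}}$ bijectively onto $\C_n^{k,\sigma}$, since the tableau is unchanged and the $j$-th copies now read $\sigma$; this is the same observation as $\pi^c = (\sigma_\pi^c, T_\pi)$ in Proposition~\ref{prop: reverse-complement}.

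Next I show that $\phi_\sigma$ preserves $\Lambda$-avoidance. Suppose positions $i_1 < \cdots < i_m$ of $\pi$ form an occurrence of $\lambda \in \Lambda$. The same positions in $\phi_\sigma(\pi)$ carry the letters $\sigma(\pi_{i_t})$. Their standardization is $\tau\lambda$, where $\tau \in \S_m$ is the permutation induced by $\sigma$ on the set of distinct letters appearing there. By symmetry, $\tau\lambda \in \Lambda$, so $\phi_\sigma(\pi)$ contains a pattern of $\Lambda$. The inverse map is $\phi_{\sigma^{-1}}$, so the same argument applies in the other direction. Thus $\phi_\sigma$ restricts to a bijection from $\C_n^{k,\mathrm{id}}(\Lambda)$ onto $\C_n^{k,\sigma}(\Lambda)$.

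The main point of care is exactly this Step 2 check. The induced relabeling $\tau$ must be a genuine element of $\S_m$: it acts on the distinct values occurring in the occurrence, whose number equals the number of distinct letters of $\lambda$. Patterns in the paper use every letter of $[m]$, so this number is $m$. If $\Lambda$ mixes patterns on different alphabets, symmetry should be read orbit-wise, applying it to each $\lambda$ on its own alphabet. Since a symmetric set is a union of orbits $\O(\lambda)$, this reading is consistent.

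Step 1 is routine. Combining the two steps gives
\[
c_n^k(\Lambda) = \sum_{\sigma \in \S_n} c_n^{k,\sigma}(\Lambda) = n!\, |\mathcal{L}_n^k(\Lambda)|.
\]
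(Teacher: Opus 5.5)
Your proposal is correct and takes essentially the same route as the paper. The paper also uses the symmetry of $\Lambda$ to relabel $\C_n^{k,\sigma}(\Lambda)$ to the identity case, then identifies $\C_n^{k,\mathrm{id}}$ with $\mathcal{L}_n^k$ by recording positions in a tableau; you additionally make explicit the induced relabeling on the distinct letters of an occurrence and both directions of the lattice-word correspondence, which the paper leaves implicit.
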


\begin{proof}
    If $\Lambda$ is symmetric, this means that $\pi$ avoids $\Lambda$ if and only if any permutation of $\sigma_\pi$, the underlying letters of the permutation, also avoids $\Lambda$. We can therefore assume $\sigma_\pi = 12\cdots n$. For each distinct canon permutation in $\C_{n}^{k,{\rm id}_n}(\Lambda)$ we generate $n!$ distinct permutations in $\C_n^k(\Lambda)$ by permuting $\sigma$. Indeed, in order for two canon permutations to be equal, their underlying permutation and their tableau must be the same. By choosing distinct permutations in $\C_{n}^{k,{\rm id}_n}(\Lambda)$, we necessarily choose permutations with distinct tableaux, so changing the underlying permutation cannot make them equal.

    To see the connection to lattice words, note that a Young tableau $T$ of shape $(k^n)$ corresponds to a lattice word $\pi_T$ defined by $(\pi_T)_i = j$ if $i$ appears in row $j$ of the tableau $T$ for $1 \leq i \leq nk$. Since $T$ has shape $(k^n)$, the resulting lattice word is $k$-regular. In the case that $\sigma_\pi = {\rm id}_n$, the canon permutation $\pi = ({\rm id}_n, T_\pi)$ is precisely the lattice word corresponding to $T_\pi$.
\end{proof}

When $n \geq 1$ and $k = 2$, $c_n^2(1221,2112) = c_n^2$, since being canon in this case is equivalent to avoiding $1221$ and $2112$. It is well known that $c_n^2 = n! C_n$. This is also clear from the previous result because the number of standard Young tableaux of shape $(2^n)$ is $C_n$ and all of them result in canon permutations. Avoidance in binary lattice words was investigated in \cite{PS26}; this is the case where $n=2$ and there is no regularity condition. If we impose regularity, these binary lattice words are now in bijection with Dyck paths, so one is counting Dyck paths with certain restrictions on their up and down patterns. When discussing avoidance in a lattice word $\pi$, we will continue to use $\pi_{i,j}$ to denote the position of the $j$-th occurrence of entry $i$.

How many canon permutations are also nonnesting when $k > 2$? As in the $k=2$ case, the answer does not depend on $k$,

\begin{proposition} \label{prop: 1221-2112}
    For $n \ge 1$ and $k \ge 3$,
    \[
    c_n^k(\O(1221)) = n! \, 2^{n-1}.
    \]
\end{proposition}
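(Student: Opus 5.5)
By Theorem \ref{thm: symmetric}, since $\O(1221)=\{1221,2112\}$ is symmetric, it suffices to show $|\mathcal{L}_n^k(1221,2112)| = 2^{n-1}$ for $k \ge 3$. So I work with $k$-regular lattice words $\pi$, i.e.\ canon permutations with $\sigma_\pi = {\rm id}_n$. I write $\pi_{i,j}$ for the position of the $j$-th copy of $i$; these satisfy the standard tableau inequalities.

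\textbf{Step 1: constraints from consecutive letters.} Fix $i$ and look at the letters $i$ and $i+1$.
\begin{itemize}
\item Avoiding $1221$ forbids two copies of $i+1$ lying between two copies of $i$. Since $\pi_{i,1} < \pi_{i+1,1}$ by the lattice condition, this forces $\pi_{i+1,2} > \pi_{i,k}$.
\item Avoiding $2112$ forbids two copies of $i$ between two copies of $i+1$. This forces at most one copy of $i$ after $\pi_{i+1,1}$, i.e.\ $\pi_{i+1,1} > \pi_{i,k-1}$.
\end{itemize}
Together these leave exactly two local configurations for the pair:
\[
i^k\,(i+1)^k \qquad\text{or}\qquad i^{k-1}\,(i+1)\,i\,(i+1)^{k-1},
\]
meaning either $\pi_{i,k} < \pi_{i+1,1}$, or $\pi_{i,k-1} < \pi_{i+1,1} < \pi_{i,k} < \pi_{i+1,2}$.

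\textbf{Step 2: letters at distance at least two are separated.} This is the step that uses $k \ge 3$. From Step 1 we have $\pi_{i+2,1} > \pi_{i+1,k-1}$. Since $k \ge 3$ gives $k-1 \ge 2$, we get $\pi_{i+1,k-1} \ge \pi_{i+1,2} > \pi_{i,k}$. Hence all of row $i$ precedes all of row $i+2$, and by induction all of row $i$ precedes all of row $j$ whenever $j \ge i+2$. Consequently no pattern of length $4$ using two letters $i<j$ with $j \ge i+2$ can occur, because all copies of $i$ come before all copies of $j$.

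\textbf{Step 3: count.} Avoidance is therefore equivalent to each of the $n-1$ consecutive pairs being in one of the two configurations of Step 1. I must check that these choices are independent and always yield a valid lattice word.
\begin{itemize}
\item Gap $i$ only involves the entries $\pi_{i,k-1},\pi_{i,k},\pi_{i+1,1},\pi_{i+1,2}$.
\item Gap $i+1$ only involves $\pi_{i+1,k-1},\pi_{i+1,k},\pi_{i+2,1},\pi_{i+2,2}$.
\item For $k \ge 3$ the copies $(i+1)_1,(i+1)_2$ used at gap $i$ precede the copies $(i+1)_{k-1},(i+1)_k$ used at gap $i+1$, with $(i+1)_2$ and $(i+1)_{k-1}$ possibly coinciding only when $k=3$, which is harmless.
\end{itemize}
Thus the word is obtained by concatenating the blocks $1^k,2^k,\dots,n^k$ and, at each chosen gap, swapping the last $i$ with the first $i+1$. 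These swaps act on disjoint adjacent pairs of positions, so any subset of gaps works. Each resulting word is $k$-regular and lattice, since the first $i+1$ always comes after $k-1 \ge 1$ copies of $i$. Distinct subsets give distinct words, which yields $2^{n-1}$ words.

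The main obstacle is Step 2, which is exactly where $k \ge 3$ enters. For $k=2$ the inequality $\pi_{i+1,k-1} \ge \pi_{i+1,2}$ fails, rows need not separate, and the count becomes $C_n$ instead of $2^{n-1}$.
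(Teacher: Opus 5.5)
Your proof is correct and follows the paper's argument. Like the paper, you reduce to lattice words via Theorem \ref{thm: symmetric}, derive $\pi_{i+1,2} > \pi_{i,k}$ and $\pi_{i+1,1} > \pi_{i,k-1}$ from the two patterns, and read off the forced shape $1^{k-1}\{12,21\}2^{k-2}\cdots n^{k-1}$, which has one binary choice per gap. Your Step 2 (separation of non-adjacent letters, which is where $k \ge 3$ enters) and your converse check spell out what the paper leaves implicit.
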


\begin{proof}
    Notice here that since $\Lambda$ is symmetric, we may assume that $\sigma_\pi = 12\cdots n$. We want to argue that for $k \geq 3$, $|\mathcal{L}_n^k(\O(1221))| = 2^{n-1}$. To avoid $1221$, we must have that for any fixed entry $i$ with $1 \le i \le n-1$ of the lattice word $\pi$, $\{\pi_{i,2},\dots,\pi_{i,k}\} < \pi_{i+1,2}$. Otherwise for some $j \geq 2$, $\pi_{i,1} < \pi_{i+1,1} < \pi_{i+1,2} < \pi_{i,j}$ so that $\pi$ contains $i (i+1)(i+1)i$ which is a $1221$-pattern.

    We also must have $\pi_{i+1,1}$ larger than $\{\pi_{i,1},\dots,\pi_{i,k-1}\}$ and only possibly smaller than $\pi_{i,k}$. If not, say $\pi_{i,j} > \pi_{i+1,1}$ for $j < k$, then $\pi_{i+1,1} < \pi_{i,j} < \pi_{i,j+1} < \pi_{i+1,j+1}$ so that $\pi$ contains $(i+1)ii(i+1)$ which is a $2112$-pattern.

    This means that to satisfy the avoidance conditions and remain a lattice word, the word must look like 
    \[
    1^{k-1} \{12,21\} 2^{k-2} \{23,32\} 3^{k-2} \{34,43\}\cdots(n-1)^{k-2}\{(n-1)n, n(n-1)\} n^{k-1}
    \]
    where the $\{i(i+1),(i+1)i\}$ notation means that we choose one of those options for that position. So for each $2 \leq i \leq n$, $\pi_{i,1} \in \{k(i-1), k(i-1) + 1\}$. Once we choose $\pi_{i,1}$ for $2 \leq i \leq n$, the other entries of $\pi$ are fixed, so in total there are $2^{n-1}$ options.
\end{proof}

 We will now generalize and attempt to understand $c_n^k(\O(12^j1))$ for general $n,k,j$. To get a better grasp on this problem, we notice that $c_n^k(\Lambda)$ is also related to the number of linear extensions of posets refining the Young diagram poset of shape $(k^n)$. The avoidance conditions impose additional relations on the poset, which the following lemma identifies,

\begin{lemma} \label{lem: 12j1-rules}
    Let $n \ge 1$, $k \ge 2$, $2 \le j \le k$, and let $\pi \in \mathcal{L}_n^k$ be a $k$-regular lattice word. Then $\pi$ avoids $12^j1$ and $21^j2$ if and only if for all $2 \le i \le n$, we have
    \[
    \pi_{i,j} > \pi_{i-1,k} \qquad \text{and} \qquad \pi_{i,1} > \pi_{i-1,k-j+1}.
    \]
\end{lemma}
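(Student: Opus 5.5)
We prove both directions; throughout, $\pi \in \mathcal{L}_n^k$ corresponds to the pair $({\rm id}_n, T_\pi)$ from Theorem \ref{thm: symmetric}. So $T_\pi$ is standard, and each letter $i$ occurs before $i+1$ in every copy: $\pi_{i,t} < \pi_{i+1,t}$ for all $t$.

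\emph{Necessity.} Fix $2 \le i \le n$.
\begin{itemize}
\item Suppose $\pi_{i,j} < \pi_{i-1,k}$. Then $\pi_{i-1,1} < \pi_{i,1} < \cdots < \pi_{i,j} < \pi_{i-1,k}$, using $\pi_{i-1,1} < \pi_{i,1}$. This gives the subsequence $(i-1)\, i^j\, (i-1)$, which is a $12^j1$ pattern.
\item Suppose $\pi_{i,1} < \pi_{i-1,k-j+1}$. Then $\pi_{i,1} < \pi_{i-1,k-j+1} < \cdots < \pi_{i-1,k} < \pi_{i,k}$, using $\pi_{i-1,k} < \pi_{i,k}$. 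This gives $i\,(i-1)^j\, i$, which is a $21^j2$ pattern.
\end{itemize}
Both steps mirror the arguments of Proposition \ref{prop: 1221-2112}, which is the case $j=2$, $k\ge 3$.

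\emph{Sufficiency.} This is the main step, because an occurrence may use two letters $a<b$ that are not adjacent. First I would propagate the adjacent conditions by monotonicity of the tableau. For $a<b$:
\begin{itemize}
\item $\pi_{b,j} \ge \pi_{a+1,j} > \pi_{a,k}$, since $\pi_{b,j} \ge \pi_{a+1,j}$ by column increase.
\item $\pi_{b,1} \ge \pi_{a+1,1} > \pi_{a,k-j+1}$.
\end{itemize}
So the two conditions hold for every pair $a<b$, not just adjacent ones.

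Now take an occurrence of $xy^jx$ with $x\ne y$.
\begin{itemize}
\item If $x<y$ (a $12^j1$ pattern), the $j$ copies of $y$ sit between two copies of $x$. The last copy of $y$ used has index at least $j$, so that position is at least $\pi_{y,j} > \pi_{x,k}$, and no copy of $x$ can come after it. This is a contradiction.
\item If $x>y$ (a $21^j2$ pattern), the first copy of $x$ used is at position at least $\pi_{x,1} > \pi_{y,k-j+1}$. After it there are at most $j-1$ copies of $y$, namely those of index $k-j+2,\dots,k$, so we cannot fit $j$ copies of $y$. This is again a contradiction.
\end{itemize}
Since $\{12^j1,21^j2\}$ is exactly $\O(12^j1)$, this covers all occurrences.

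The only care needed is the index counting in the last step: in a $21^j2$ occurrence, the $j$ copies of $1$ must have indices forming $j$ distinct values in $\{1,\dots,k\}$, all exceeding $k-j+1$. The hypothesis $j \le k$ ensures the indices in the conditions are in range.
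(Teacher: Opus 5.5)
Your proof is correct and follows essentially the same route as the paper. Both arguments reduce an occurrence on letters $a<b$ to the inequality $\pi_{b,j}<\pi_{a,k}$ (for $12^j1$) or $\pi_{b,1}<\pi_{a,k-j+1}$ (for $21^j2$), and then use column monotonicity of $T_\pi$ to pass to consecutive rows; you simply split the paper's single ``exists precisely when'' argument into separate necessity and sufficiency directions and write out the index counting explicitly.
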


\begin{proof}
    An occurrence of $12^j1$ consists of a letter $a$, then $j$ copies of a letter $b > a$, then a second copy of $a$. Since the underlying permutation is the identity, $a$ occupies some row $i$ and $b$ some row $i' > i$. This means $\pi_{i,1} < \pi_{i',1}$ holds automatically, so a $12^j1$ pattern exists precisely when $\pi_{i',j} < \pi_{i,k}$ for some $i < i'$. As $\pi_{i',j} \ge \pi_{i+1,j}$, it suffices to consider consecutive rows, which gives the first condition.

    An occurrence of $21^j2$ consists of a letter $b$, then $j$ copies of a letter $a < b$, then a second copy of $b$. The $j$ copies of $a$ that follow the first $b$ must be the last $j$ copies of $a$, and a further copy of $b$ is then automatically available because $\pi_{i',k} > \pi_{i,k}$; so such an occurrence exists precisely when $\pi_{i',1} < \pi_{i,k-j+1}$ for some $i < i'$. Again consecutive rows suffice, giving the second condition.
\end{proof}

Equivalently, Lemma \ref{lem: 12j1-rules} states that, in constructing the corresponding Young tableau, for $i \ge 2$ we cannot place any entry in row $i$ until row $i-1$ has at least $k-j+1$ entries, and we cannot place an entry in the final $k-j+1$ positions of row $i$ until row $i-1$ is completely filled. 

\begin{remark} \label{rem: poset-contrast}
    It is worth contrasting Lemma \ref{lem: 12j1-rules} with \cite[Section 4]{BD24}, where subposets of $[k] \times [n]$ are obtained by removing relations of the form $(p,j) < (p,j+1)$. The resulting so-called dissonant canon permutations interpolate between canon permutations and arbitrary multiset permutations. Imposing avoidance conditions moves in the opposite direction, often adjoining additional relations to $[k] \times [n]$ instead of removing them. It is interesting to consider how and when these additional relations still result in a ``nice" poset, we make this more rigorous in Question \ref{Q: palindromic-sym}.
\end{remark}

We are now ready to approach enumerating $c_n^k(\O(12^j1))$. We must break the answer into cases depending on the relationship between $j$ and $k$. We first consider the case where $k > 2(j-1)$:

\begin{theorem} \label{thm: 12j1-2(j-1)<k}
    For $n \geq 1$, $j \geq 2$, and $k > 2(j-1)$,
    \[
    c_n^{k}(\O(12^j1)) = n! \, \binom{2j-2}{j-1}^{n-1}.
    \]
\end{theorem}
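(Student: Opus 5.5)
By Theorem \ref{thm: symmetric}, since $\O(12^j1)$ is symmetric it suffices to show $|\mathcal{L}_n^k(\O(12^j1))| = \binom{2j-2}{j-1}^{n-1}$ when $k > 2(j-1)$. By Lemma \ref{lem: 12j1-rules}, a $k$-regular lattice word $\pi$ avoids $\O(12^j1)$ if and only if, for every $2 \le i \le n$,
\[
\pi_{i,1} > \pi_{i-1,k-j+1} \qquad\text{and}\qquad \pi_{i,j} > \pi_{i-1,k}.
\]
In words: the first $k-j+1$ letters equal to $i-1$ precede every letter $i$, and the last $k-j+1$ letters equal to $i$ follow every letter $i-1$. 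The only freedom is therefore how the last $j-1$ copies of $i-1$ interleave with the first $j-1$ copies of $i$. I would show that these $n-1$ interleavings are independent and unconstrained, so the word splits into a fixed skeleton plus $n-1$ free windows.

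First I would describe the skeleton. For each $i$, write the $k$ copies of $i$ as three consecutive groups: a head $H_i$ of the first $j-1$ copies, a middle $M_i$ of $k-2(j-1) \ge 1$ copies, and a tail $E_i$ of the last $j-1$ copies. Take $H_1 = \varnothing$ and $E_n = \varnothing$ in the sense that those copies just join the adjacent middle. The claim is that $\pi$ is exactly
\[
M_1\, W_{1,2}\, M_2\, W_{2,3}\, M_3 \cdots W_{n-1,n}\, M_n ,
\]
where $W_{i-1,i}$ is an arbitrary shuffle of $E_{i-1}$ (that is, $(i-1)^{j-1}$) with $H_i$ (that is, $i^{j-1}$). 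Each window then has $\binom{2j-2}{j-1}$ choices, giving the product $\binom{2j-2}{j-1}^{n-1}$.

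Next I would prove this block structure from the two inequalities. The condition $\pi_{i,1} > \pi_{i-1,k-j+1}$ puts all of $H_{i-1}$ and $M_{i-1}$ before any copy of $i$. The condition $\pi_{i,j} > \pi_{i-1,k}$ puts all of $M_i$ and $E_i$ after every copy of $i-1$.

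The step needing care, and where $k > 2(j-1)$ is used, is that windows for different pairs cannot overlap. Copies of $i+1$ only begin after $\pi_{i,k-j+1}$. Because $k-j+1 \ge j$, the position $\pi_{i,k-j+1}$ lies in $M_i$, after row $i-1$ is finished. Hence row $i-1$ and row $i+1$ never interleave, and letters from distinct windows cannot mix.

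Conversely, every such word is a valid lattice word satisfying both inequalities. In the window $W_{i-1,i}$, the $m$-th copy of $i$ comes after the $m$-th copy of $i-1$, because those copies ($m \le j-1 \le k-j+1$) already lie in $H_{i-1}M_{i-1}$. The prefix condition for other letters is clear from the block order, and both inequalities hold by construction.

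Multiplying by $n!$ from Theorem \ref{thm: symmetric} gives the stated count. The main obstacle is writing the non-overlap argument cleanly. I would phrase it via the poset of Remark \ref{rem: poset-contrast}: the added relations make it an ordinal sum of chains and of products of two $(j-1)$-chains. Its linear extensions then multiply, since a product of two $(j-1)$-chains that must be merged freely contributes a factor $\binom{2j-2}{j-1}$.
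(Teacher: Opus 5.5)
Your proof is correct and takes essentially the same approach as the paper. You reduce to lattice words via Theorem \ref{thm: symmetric}, apply Lemma \ref{lem: 12j1-rules}, and use $k-j+1\ge j$ (a nonempty middle block in each row) to force the form $M_1W_{1,2}M_2\cdots W_{n-1,n}M_n$, which is exactly the paper's $1^{k-j+1}w\{1^{j-1},2^{j-1}\}2^{k-2j+2}\cdots n^{k-j+1}$, with $n-1$ independent shuffles of $\binom{2j-2}{j-1}$ choices each. One small slip in your closing poset remark: each window is the \emph{disjoint union} of two $(j-1)$-chains, not their product, and that is what gives $\binom{2j-2}{j-1}$ linear extensions; the product $[j-1]\times[j-1]$ would instead count standard Young tableaux of a square shape.
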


\begin{proof}
    By Theorem \ref{thm: symmetric}, it suffices to prove that the number of $k$-regular lattice words avoiding $\O(12^j1)$ is $\binom{2j-2}{j-1}^{n-1}$. We claim in general that such a $k$-regular lattice word must look like
    \[
    1^{k-j+1} w\{1^{j-1},2^{j-1}\} 2^{k-2j+2} w\{2^{j-1},3^{j-1}\} 3^{k-2j+2} \cdots (n-1)^{k-2j+2} w\{(n-1)^{j-1} ,n^{j-1}\} n^{k-j+1} 
    \]
    where each $w\{i^{j-1},(i+1)^{j-1}\}$ is a word on $\{i^{j-1},(i+1)^{j-1}\}$. The key step here is to realize that for any fixed $2 \leq i \leq n-1$ we must have 
    \[
    \{\pi_{i+1,1},\dots,\pi_{i+1,j-1}, \pi_{i,k-j+2},\dots,\pi_{i,k}\} > \{\pi_{i,j},\dots,\pi_{i,k-j+1}\} > \{\pi_{i,1},\dots,\pi_{i,j-1},\pi_{i-1,k-j+2},\dots,\pi_{i-1,k}\},
    \] 
    where here all of these sets are guaranteed to be nonempty because $2(j-1) < k$. It is clear that $\pi_{i,j-1} < \pi_{i,j} < \pi_{i,k-j+2}$ from the usual Young diagram poset structure. The inequalities $\pi_{i,j} > \pi_{i-1,k}$ and $\pi_{i+1,1} > \pi_{i,k-j+1}$ are precisely the two conditions of Lemma \ref{lem: 12j1-rules}. The usual Young diagram poset relations imply the other inequalities. 
    
    Applied iteratively for each row, this shows the corresponding lattice word must have the stated form. Conversely, it is not hard to see that any lattice word of this form satisfies the conditions of Lemma \ref{lem: 12j1-rules}. There are $n-1$ words, and there are $\binom{2j-2}{j-1}$ options for each word. This shows when $2 \leq j$ and $2(j-1) < k$, $|\mathcal{L}_n^k(\O(12^j1))| = \binom{2j-2}{j-1}^{n-1}$, the result then follows from Theorem \ref{thm: symmetric}.
\end{proof}

We note that this recovers Proposition \ref{prop: 1221-2112} since there $j = 2$ and we assumed $k \geq 3$ which is equivalent to $2 < k$. We keep the original theorem as motivation for considering the generalization. We now consider the case where $k = 2(j-1)$. This case has a nice solution in terms of lattice paths that have already appeared in the context of the $(s,t)$-tennis ball problem. To state the result we recall its definition. For integers $s > t \ge 1$ and $m \ge 0$, let $T_m(s,t)$ denote the solution to the $(s,t)$-tennis ball problem, that is, the number of lattice paths from $(0,0)$ to $\big(m(s-t),\, mt\big)$ using unit length north and east steps that lie weakly below the path $(N^tE^{s-t})^m$. We refer the reader to \cite{MN05} for background on the tennis ball problem and its solution.

\begin{theorem} \label{thm: 12j1-k=2(j-1)}
     For $n \geq 1$, $j \ge 2$ and $k= 2(j-1)$,
    \[
    c_n^{k}(\O(12^j1)) = n! \;T_{n-1}(k,j-1).
    \]
\end{theorem}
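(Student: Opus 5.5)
By Theorem \ref{thm: symmetric} it suffices to show $|\mathcal{L}_n^k(\O(12^j1))| = T_{n-1}(k,j-1)$ when $k = 2(j-1)$. The plan is to translate the two inequalities of Lemma \ref{lem: 12j1-rules} into a block structure on the tableau, and then read off a lattice path. Because $k-j+1 = j-1$, each row $i$ of the tableau splits into two halves of the same size $j-1$. Let $A_i = \{\pi_{i,1},\dots,\pi_{i,j-1}\}$ be the first half and $B_i = \{\pi_{i,j},\dots,\pi_{i,k}\}$ the second half. The conditions of Lemma \ref{lem: 12j1-rules} then become:
\begin{itemize}
\item $\pi_{i,1} > \pi_{i-1,j-1}$, i.e. $A_{i-1} < A_i$;
\item $\pi_{i,j} > \pi_{i-1,k}$, i.e. $B_{i-1} < B_i$ and also $A_{i-1} < B_i$.
\end{itemize}
Row standardness adds $A_i < B_i$. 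I will check that these relations already imply all the column relations of the Young diagram poset. For $c \le j-1$ we have $\pi_{i,c} > \pi_{i-1,c}$ because $A_i > A_{i-1}$ entirely, and for $c \ge j$ we have $\pi_{i,c} > \pi_{i-1,c}$ because $B_i > B_{i-1}$. So the lattice words avoiding $\O(12^j1)$ are exactly the linear extensions of the poset generated by these block relations, with each block internally increasing.

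Next I describe these linear extensions as words. The $A$-entries appear in the fixed order $A_1, A_2, \dots, A_n$, and the $B$-entries in the fixed order $B_1,\dots,B_n$. A linear extension is therefore determined by the shuffle of the $A$-letters with the $B$-letters, which I record as a path with an $E$ step for each $A$-letter and an $N$ step for each $B$-letter. Since the first half of each row can be filled with no constraint from the same row, the remaining constraint is the one for $B_i$. The first letter of $B_i$ may be placed only after all of $A_i$ has been placed, and $A_i < B_i$ is then automatic. In counting terms: if $a$ $E$-steps have been taken, the number of $N$-steps taken so far is at most $(j-1)\bigl(1+\lfloor a/(j-1)\rfloor\bigr)$.

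Now I reduce to the tennis ball count. $A_1$ comes first, since every other block lies above $A_1$. $B_n$ comes last, since it lies above all of $A_n$ and $B_{n-1}$. Stripping these two forced segments leaves a path with $(n-1)(j-1)$ $E$-steps (from $A_2,\dots,A_n$) and $(n-1)(j-1)$ $N$-steps (from $B_1,\dots,B_{n-1}$). The constraint becomes: the $N$-count after $a$ of these $E$-steps is at most $(j-1)(\lfloor a/(j-1)\rfloor + 1)$. This says exactly that the path stays weakly below $(N^{j-1}E^{j-1})^{n-1}$, which runs from $(0,0)$ to $((n-1)(j-1),(n-1)(j-1))$. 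With $s = k = 2(j-1)$ and $t = j-1$ we have $s-t = t = j-1$, so this is precisely the set counted by $T_{n-1}(k,j-1)$.

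The main obstacle is the bookkeeping in this last step. I must verify carefully that the bound on $N$-steps after $a$ $E$-steps matches "weakly below the staircase", including the boundary cases $a \equiv 0 \pmod{j-1}$ and the off-by-one coming from removing $A_1$. I must also check that the map is a bijection in both directions: any such path, reinserted between $A_1$ and $B_n$, yields a standard tableau satisfying Lemma \ref{lem: 12j1-rules}. I would sanity-check small cases, for instance $j=2$, $k=2$, where $T_{n-1}(2,1) = C_n$ agrees with $c_n^2 = n!\,C_n$.
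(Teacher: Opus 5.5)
Your proposal is correct and follows essentially the paper's argument. Lemma \ref{lem: 12j1-rules} makes the first $j-1$ columns and the last $j-1$ columns into two chains across all rows, whose only cross relation is that $B_i$ starts after $A_i$ ends; $A_1$ is forced first and $B_n$ last, and the remaining shuffle is read as a path weakly below $(N^{j-1}E^{j-1})^{n-1}$. One small slip: for the full path, before removing $A_1$, the bound after $a$ $E$-steps should be $(j-1)\lfloor a/(j-1)\rfloor$, not $(j-1)\bigl(1+\lfloor a/(j-1)\rfloor\bigr)$; the $+1$ only appears after stripping $A_1$, and that stripped form is the one you correctly use.
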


\begin{proof}

Since $k = 2(j-1)$ we have $k-j+1 = j-1$, so the conditions of Lemma \ref{lem: 12j1-rules} say $\pi_{i,j} > \pi_{i-1,k}$ and $\pi_{i,1} > \pi_{i-1,j-1}$.  Together with the fact that the rows of $T_\pi$ increase, the condition $\pi_{i,1} > \pi_{i-1,j-1}$ forces the cells in columns $1,\dots,j-1$ to increase down the rows, as shown with the black arrows in Figure \ref{fig: posetStructure}, and $\pi_{i,j} > \pi_{i-1,k}$ forces the cells in columns $j,\dots,k$ to increase, as shown with the blue arrows.
    
    \begin{figure}   
 \begin{center}
\begin{tikzpicture}[
    box/.style={rectangle, draw=black, minimum size=0.8cm, outer sep=0pt, anchor=center},
    pathL/.style={->, >=Stealth, black, line width=0.7pt},
    pathR/.style={->, >=Stealth, blue, line width=0.7pt},
    jump/.style={->, >=Stealth, dashed, opacity=0.4}
]

    \foreach \r in {1,...,4} {
        \foreach \c in {1,...,8} {
            \node[box] (c\c r\r) at (\c*0.8, -\r*0.8) {};
        }
    }

    \foreach \r in {1,...,4} {
        \draw[pathL] (c1r\r.center) -- (c2r\r.center);
        \draw[pathL] (c2r\r.center) -- (c3r\r.center);
        \draw[pathL] (c3r\r.center) -- (c4r\r.center);
        
        \ifnum\r<4
            \pgfmathtruncatemacro{\nextR}{\r+1}
            \draw[jump, black] (c4r\r.center) -- (c1r\nextR.center);
        \fi
    }

    \foreach \r in {1,...,4} {
        \draw[pathR] (c5r\r.center) -- (c6r\r.center);
        \draw[pathR] (c6r\r.center) -- (c7r\r.center);
        \draw[pathR] (c7r\r.center) -- (c8r\r.center);
        
        \ifnum\r<4
            \pgfmathtruncatemacro{\nextR}{\r+1}
            \draw[jump, blue] (c8r\r.center) -- (c5r\nextR.center);
        \fi
    }

\end{tikzpicture}
\caption{The two increasing chains of Theorem \ref{thm: 12j1-k=2(j-1)}, drawn for $n = 4$ and $k = 8 = 2(j-1)$ with $j = 5$.  The black chain runs through columns $1,\dots,j-1$ and the blue chain through columns $j,\dots,k$.}   \label{fig: posetStructure}
\end{center}   
    \end{figure}

We must also have the standard inequalities imposed on a Young tableau. This forces the first $j-1$ entries in row $1$ to be the numbers $1,\dots,j-1$ and the final $j-1$ entries in row $n$ to be $kn-(j-1)+1,\dots, kn$. We have to then decide to place the remaining entries either in the first or second increasing sequence. However, we must also respect that the first $j-1$ entries in row $i$ have to be smaller than the last $j-1$ entries in the same row. The ways to fill these tableaux are exactly in bijection with lattice paths from $(0,0)$ to $((n-1)(j-1),(n-1)(j-1))$ that are weakly below $(N^{j-1}E^{j-1})^{n-1}$. 

Indeed, given such a lattice path, each $N$ step corresponds to placing an entry in the second increasing path, each $E$ step corresponds to placing an entry in the first increasing path after forcing the first $j-1$ entries to be $1,\dots,j-1$. The restriction on the paths ensures that for every $i$ the $\big(i(j-1)+1\big)$-st
$N$ step comes after the $i(j-1)$-th $E$ step, which means the first $j-1$ entries in each row are smaller than the last $j-1$ entries in that row. It is clear that this construction is a bijection.

The number of such paths is precisely the solution to the $(k,j-1)$ tennis ball problem: the number of lattice paths from $(0,0)$ to $((n-1)(k-(j-1)),(n-1)(j-1)) = ((n-1)(j-1),(n-1)(j-1))$ that lie weakly below $(N^{j-1}E^{k-(j-1)})^{n-1}$.
\end{proof}

\begin{example}
    For example, if $k = 8$ and $j =5$, the following lattice path and tableau filling are in bijection:
    \begin{center}
\begin{tikzpicture}[scale=0.7]
    \draw[step=1, gray!20, thin] (0,0) grid (8,8);
    
    \draw[ultra thick, gray!50, dashed] (0,0) -- (0,4) -- (4,4) -- (4,8) -- (8,8);
    \node[gray, anchor=south east] at (4,6) {$(N^4E^4)^2$};

    \draw[line width=1.5pt, red, -{Stealth[length=3mm]}] 
        (0,0) -- (2,0) -- (2,2) -- (5,2) -- (5,6) -- (8,6) -- (8,8);

    \foreach \coord in {(0,0), (2,0), (2,2), (5,2), (5,6), (8,6), (8,8)} {
        \fill[red] \coord circle (2.5pt);
    }

    \node[below left] at (0,0) {$(0,0)$};
    \node[above right] at (8,8) {$(8,8)$};

    \draw[->] (-0.5,0) -- (8.5,0) node[right] {$x$};
    \draw[->] (0,-0.5) -- (0,8.5) node[above] {$y$};
\end{tikzpicture}
\end{center}

This corresponds to the tableau
\[
    \begin{ytableau}
        1&2&3&4&7&8&12&13\\
        5&6&9&10&14&15&19&20\\
        11&16&17&18&21&22&23&24
    \end{ytableau}
    \]
    which corresponds to the lattice word $111122112231122333223333$. The canon permutations are all $\S_3$ permutations of this lattice word.
\end{example}

The remaining cases are when $j < k < 2(j-1)$ and $j \geq k$. The first case seems complicated, we provide a solution when $n = 2$ that works for any valid values of $j$ and $k$ (see Remark \ref{rem: n=2-cases}). We leave the remaining cases as an open question (see Question \ref{Q: 12j1}):

\begin{proposition} \label{prop: 121-n=2}
    For $2 \le j \le k$,
    \[
    c_2^{k}(\O(12^j1)) = 2\left[\binom{2j-2}{j-1} - \binom{2j-2}{2j-k-3} \right].
    \]
\end{proposition}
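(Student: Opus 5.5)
By Theorem \ref{thm: symmetric}, it suffices to show that $|\mathcal{L}_2^k(\O(12^j1))| = \binom{2j-2}{j-1} - \binom{2j-2}{2j-k-3}$; multiplying by $2! = 2$ then gives the statement. With $n=2$, Lemma \ref{lem: 12j1-rules} reduces the avoidance conditions to two inequalities:
\[
\pi_{2,1} > \pi_{1,k-j+1} \qquad\text{and}\qquad \pi_{2,j} > \pi_{1,k}.
\]
So we are counting $k$-regular lattice words on $\{1,2\}$, i.e.\ Dyck-type paths with $k$ ones and $k$ twos, subject to these constraints.

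I will read the word as a lattice path, with $1$ an east step and $2$ a north step. The first condition says the first $k-j+1$ letters are all $1$'s. The second condition says at most $j-1$ twos precede the last $1$, so the word ends in at least $k-j+1$ twos. Removing this forced prefix $1^{k-j+1}$ and forced suffix $2^{k-j+1}$ leaves an arbitrary word $w$ with $j-1$ ones and $j-1$ twos. There are $\binom{2j-2}{j-1}$ such words, and conversely any $w$ with these letter counts gives a word satisfying both inequalities.

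It remains to impose the lattice condition on $1^{k-j+1}\,w\,2^{k-j+1}$. The prefix contributes a surplus of $k-j+1$ ones, and the suffix consists only of $2$'s, which never break the lattice condition once the total counts agree. So the condition is exactly that every prefix of $w$ has (number of $2$'s) $-$ (number of $1$'s) at most $k-j+1$. Equivalently, the path of $w$ never touches the line $y - x = k-j+2$.

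I would count the violating paths with the reflection principle. A path from $(0,0)$ to $(j-1,j-1)$ touching $y = x + (k-j+2)$ reflects, after its first touch, to a path ending at $(j-1-(k-j+2),\, j-1+(k-j+2))$. Such paths have $2j-k-3$ east steps out of $2j-2$ total, so there are $\binom{2j-2}{2j-k-3}$ of them. When $2j-k-3<0$, i.e.\ $k \ge 2j-2$, this binomial is $0$, which is consistent with Theorems \ref{thm: 12j1-2(j-1)<k} and \ref{thm: 12j1-k=2(j-1)} at $n=2$. This gives the claimed count.

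The main obstacle is getting the forced-prefix and forced-suffix bookkeeping exactly right. I need to check that the two lemma conditions translate into precisely these forced blocks with no extra interaction with $w$. This matters most when $k-j+1$ is small, in particular $j=k$, where the forced blocks have length $1$. I also need the reflection boundary offset to be exactly $k-j+2$ so that the binomial index comes out as $2j-k-3$.
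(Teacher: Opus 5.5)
Your proof is correct and takes essentially the same route as the paper. Lemma \ref{lem: 12j1-rules} at $n=2$ forces the prefix $1^{k-j+1}$ and the suffix $2^{k-j+1}$, the middle word with $j-1$ of each letter must stay within height $k-j+1$, the reflection principle removes $\binom{2j-2}{2j-k-3}$ bad paths, and Theorem \ref{thm: symmetric} supplies the factor $2$. The paper phrases this with Dyck paths and also notes the equivalent count of skew tableaux of shape $(k,j-1)\setminus(k-j+1)$; your explicit checks of the converse direction and of the lattice condition are slightly more careful than the paper's terse version.
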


\begin{proof}
    This problem is equivalent to counting the number of standard skew tableaux of shape $(k,j-1) \setminus (k-j+1)$. Indeed, given such a tableau, shift all the entries up by $k-j+1$ and add the entries $1,\dots,k-j+1$ into the first $k-j+1$ entries of row one. Then add the entries $k+j,\dots,2k$ as the final missing $k-j+1$ entries in row two. By Lemma \ref{lem: 12j1-rules} with $n = 2$, the conditions are $\pi_{2,j} > \pi_{1,k}$ and $\pi_{2,1} > \pi_{1,k-j+1}$, which force our tableaux to begin and end in this way, so this clearly establishes a bijection.

    We count these directly. Lattice words on two letters correspond to Dyck paths. In this context, Lemma \ref{lem: 12j1-rules} says that the Dyck path must start with at least $k-j+1$ up steps and end with at least $k-j+1$ down steps. If we delete those, we are left with an arbitrary path with $j-1$ up and $j-1$ down steps that never falls below $-(k-j+1)$, reflecting and removing the bad paths gives precisely the stated difference of binomials. The result now follows from Theorem \ref{thm: symmetric}.
    \end{proof}

    \begin{remark} \label{rem: n=2-cases}
        We note that Proposition \ref{prop: 121-n=2} agrees with all of our previous results wherever they apply. Indeed, when $k \ge 2(j-1)$, the second binomial is zero and we recover $2\binom{2j-2}{j-1}$ in agreement with Theorem \ref{thm: 12j1-2(j-1)<k} and with Theorem \ref{thm: 12j1-k=2(j-1)} when $n = 2$. When $j = k$ it evaluates to $\binom{2k-2}{k-1} - \binom{2k-2}{k-3} = C_k = C_2^{(k)}$ in agreement with Theorem \ref{thm: 12j1-j>=k}. 
    \end{remark}

\begin{example}
    For $k = 8$ and $j = 6$, $|\mathcal{L}_2^8(\O(12^61))| = \binom{10}{5} - \binom{10}{1} = 242$ and $c_2^8(\O(12^61)) = 484$.  As one example, the skew tableau
    \[
    \begin{ytableau}
        \none&\none&\none&3&6&7&8&9\\
        1&2&4&5&10&\none&\none&\none
    \end{ytableau}
    \]
    corresponds to the Dyck path UUUDDUDDUUUUDDDD and the tableau
    \[
    \begin{ytableau}
        {\bf 1}&{\bf 2}&{\bf 3}&6&9&10&11&12\\
        4&5&7&8&13&{\bf 14}&{\bf 15}&{\bf 16}
    \end{ytableau}
    \]
    of shape $(8^2)$, which corresponds to the lattice word $1112212211112222$.
\end{example}

Unfortunately, our approach in Proposition \ref{prop: 121-n=2} doesn't generalize because once $n \geq 3$, just considering skew tableaux no longer captures the additional requirements placed on the first and final $k-j+1$ columns on the tableau. We leave this case as an open question.

\begin{question} \label{Q: 12j1}
    For $n \geq 3$ and $4 \le j < k < 2(j-1)$, what is $c_n^k(\O(12^j1))$?  The smallest open case is $j = 4$, $k = 5$, where $|\mathcal{L}_n^5(\O(12^41))|$ begins (for $n \ge 1$)
    \[
    1,\ 19,\ 795,\ 48875,\ 3780362,\ 340746821,\ 34264303289, \dots
    \]
    and $c_n^5(\O(12^41))$ is $n!$ times these values.  It would also be interesting to answer this for $k = j+1$ in general.
\end{question}

We now handle the final case, where $j \geq k$. This case is less interesting because the avoidance condition is irrelevant,

\begin{theorem} \label{thm: 12j1-j>=k}
    For $n \geq 1$ and $k \geq 2$ and $j \geq k$,
    \[
    c_n^k(\O(12^j1)) = n! \, C_n^{(k)}
    \]
    the $k$-dimensional Catalan numbers, which is also the number of standard Young tableaux of shape $(k^n)$.
\end{theorem}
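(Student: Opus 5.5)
The plan is to reduce to lattice words via Theorem \ref{thm: symmetric} and then show that the avoidance condition is vacuous. Since $\O(12^j1)$ is symmetric, Theorem \ref{thm: symmetric} gives
\[
c_n^k(\O(12^j1)) = n!\,|\mathcal{L}_n^k(\O(12^j1))|.
\]
It therefore suffices to show that every $k$-regular lattice word avoids both $12^j1$ and $21^j2$ when $j \ge k$. Then $|\mathcal{L}_n^k(\O(12^j1))| = |\mathcal{L}_n^k|$, which equals the number of standard Young tableaux of shape $(k^n)$ under the bijection $T \mapsto \pi_T$ described in the proof of Theorem \ref{thm: symmetric}. That number is $C_n^{(k)}$ by the hook length formula, as recorded in Proposition \ref{prop: standardTab}.

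\textbf{Case $j > k$.} Both patterns are impossible in any $k$-regular word. The pattern $12^j1$ requires $j$ copies of a single letter, and the pattern $21^j2$ likewise requires $j$ copies of a single letter, but each letter occurs only $k < j$ times.

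\textbf{Case $j = k$.} Here I would argue directly, in the same way as the proof of Lemma \ref{lem: 12j1-rules}, rather than quoting that lemma's statement. (Its proof does allow $j = k$, but the conditions become $\pi_{i,k} > \pi_{i-1,k}$ and $\pi_{i,1} > \pi_{i-1,1}$, which one still has to observe hold automatically.)

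\emph{Avoiding $12^k1$.} An occurrence uses all $k$ copies of some letter $b$, strictly between two copies of a letter $a < b$. In a lattice word, letter $a$ sits in an earlier row than $b$, so column-strictness of the tableau gives $\pi_{a,k} < \pi_{b,k}$. Hence no copy of $a$ can follow the last copy of $b$.

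\emph{Avoiding $21^k2$.} An occurrence uses all $k$ copies of some letter $a$, all preceded by a copy of a letter $b > a$. But column-strictness gives $\pi_{a,1} < \pi_{b,1}$, so the first copy of $a$ precedes every copy of $b$.

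Together these show every $k$-regular lattice word avoids $\O(12^j1)$, which completes the argument.

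I do not expect a serious obstacle. The only point requiring care is the boundary case $j = k$, where one must check that the relations coming from Lemma \ref{lem: 12j1-rules} are already implied by the standardness of the tableau.
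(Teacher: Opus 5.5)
Your proposal is correct and follows essentially the same route as the paper: reduce to $k$-regular lattice words via Theorem \ref{thm: symmetric}, dismiss $j>k$ because neither pattern fits with only $k$ copies of each letter, and for $j=k$ observe that column-strictness of the tableau already rules out both patterns. The only difference is that you check the $j=k$ case directly from $\pi_{a,k}<\pi_{b,k}$ and $\pi_{a,1}<\pi_{b,1}$, whereas the paper reads the same two relations off Lemma \ref{lem: 12j1-rules} with $k-j+1=1$.
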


\begin{proof}
    By Theorem \ref{thm: symmetric}, it suffices to prove that $|\mathcal{L}_n^k(\O(12^j1))| = C_n^{(k)}$. This is clear when $j > k$, since neither pattern fits in a word with only $k$ copies of each letter.  When $j = k$ we have $k-j+1 = 1$, so both conditions of Lemma \ref{lem: 12j1-rules} read off relations that hold in every standard Young tableau because the columns increase. Either way $|\mathcal{L}_n^k(\O(12^j1))| = |\mathcal{L}_n^k| = C_n^{(k)}$.
\end{proof}

We note that this provides an additional proof that $c_n^2(1221,2112) = n! C_n$ since $C_n^{(2)} = C_n$. We can ask a similar question for other symmetric sets, like the noncrossing $\{1212,2121\}$ or the more general $\{(12)^j, (21)^j\}$ for $j \ge 2$. This case is surprisingly more difficult than the case we just considered. We can prove some results when $j$ or $n$ are small, but the general cases are left open.

\begin{proposition} \label{prop: 1212-2121}
    For $n \geq 1$, $k \geq 2$,
    \[
    c_n^k(\O(1212)) = n!
    \]
\end{proposition}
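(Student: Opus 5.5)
By Theorem \ref{thm: symmetric}, since $\O(1212)=\{1212,2121\}$ is symmetric, it suffices to show that $|\mathcal{L}_n^k(\O(1212))| = 1$, i.e.\ that exactly one $k$-regular lattice word avoids both $1212$ and $2121$. The natural candidate is $1^k2^k\cdots n^k$, whose tableau has row $i$ equal to $\{(i-1)k+1,\dots,ik\}$. This word contains no two distinct letters interleaved at all, so it avoids both patterns. The remaining work is uniqueness.

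To prove uniqueness, I would first reduce to consecutive rows. In a lattice word, for $a<b$ we have $\pi_{a,1}<\pi_{b,1}$ and $\pi_{a,k}<\pi_{b,k}$, since the columns of the standard tableau increase. Now suppose some pair of rows $a<b$ is not separated, i.e.\ $\pi_{b,1}<\pi_{a,k}$. Then
\[
\pi_{a,1}<\pi_{b,1}<\pi_{a,k}<\pi_{b,k},
\]
which gives the subsequence $a\,b\,a\,b$, an occurrence of $1212$. This holds for any $k\ge 2$, because the positions $\pi_{a,1}$, $\pi_{b,1}$, $\pi_{a,k}$, $\pi_{b,k}$ are distinct: $\pi_{a,1}\neq\pi_{a,k}$ and $\pi_{b,1}\neq\pi_{b,k}$ when $k\ge2$. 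Applying this to $a=i$ and $b=i+1$ yields $\pi_{i,k}<\pi_{i+1,1}$ for every $1\le i\le n-1$. That is, every entry of row $i$ precedes every entry of row $i+1$, so the tableau consists of consecutive blocks and the lattice word is $1^k2^k\cdots n^k$.

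It follows that $|\mathcal{L}_n^k(\O(1212))|=1$, and hence $c_n^k(\O(1212))=n!$. The case $n=1$ is trivial.

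I do not expect any step to be a real obstacle. The only point needing care is checking that the four positions above are distinct and ordered as claimed, which uses $k\ge 2$ and the column-strictness of the tableau. The argument closely mirrors Proposition \ref{prop: 212}. Notably, it uses only $1212$; avoidance of $2121$ then comes for free, because the block word avoids every interleaving pattern.
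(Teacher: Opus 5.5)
Your proof is correct and is essentially the paper's argument: the paper also forces $\pi_{i,k}<\pi_{i+1,1}$ using the chain $\pi_{i,1}<\pi_{i+1,1}<\pi_{i,k}<\pi_{i+1,k}$, and concludes there is a single admissible tableau. The only difference is that the paper works directly with an arbitrary $\sigma_\pi$, so the occurrence is $1212$ or $2121$ depending on whether $(\sigma_\pi)_i<(\sigma_\pi)_{i+1}$, and then pairs every $\sigma_\pi$ with that tableau; you instead first pass to lattice words via Theorem~\ref{thm: symmetric}, which lets you use only $1212$.
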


\begin{proof}
    To be canon and also noncrossing, we must have $\pi_{i,k} < \pi_{i+1,1}$ for all $1 \leq i \leq n-1$. Otherwise, $\pi_{i,1} < \pi_{i+1,1} < \pi_{i,k} < \pi_{i+1,k}$ which means $(\sigma_\pi)_i (\sigma_\pi)_{i+1} (\sigma_\pi)_i (\sigma_\pi)_{i+1}$ is a subsequence of $\pi$. This is either a $1212$ or a $2121$.

    This means there is only one allowable tableau since all the entries of row $i$ must be less than all the entries in row $i+1$ for $1 \leq i \leq n-1$. Any permutation paired with this tableau will still avoid $1212$ and $2121$. This completes the proof.
\end{proof}

\begin{theorem} \label{thm: 12j-21j}
    For $j \ge 2$ and $k \ge 1$,
    \[
    c_2^k(\O((12)^j)) = 2\sum_{i=1}^{j-1} N(k,i)
    \]
    where $N(k,i)$ is the Narayana number. In particular the sum stabilizes at $2C_k$ once $j > k$.
\end{theorem}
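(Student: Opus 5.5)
By Theorem \ref{thm: symmetric}, since $\O((12)^j)=\{(12)^j,(21)^j\}$ is symmetric, it suffices to show that $|\mathcal{L}_2^k(\O((12)^j))| = \sum_{i=1}^{j-1} N(k,i)$; the factor $2 = 2!$ then comes for free. The plan is to translate everything into Dyck paths. A $k$-regular lattice word on $\{1,2\}$ corresponds to a Dyck path of semilength $k$, with $1 \mapsto U$ and $2 \mapsto D$, exactly as in the proof of Proposition \ref{prop: 121-n=2}. We then count these paths by their number of peaks, which is the statistic the Narayana numbers refine.

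\textbf{Step 1: runs.} Write the lattice word as a concatenation of maximal runs. It must begin with $1$ (the lattice condition) and end with $2$ (regularity plus the lattice condition). So the runs alternate $1^{a_1}2^{b_1}1^{a_2}2^{b_2}\cdots 1^{a_p}2^{b_p}$ with all $a_r,b_r\ge 1$. The number $p$ of $1$-runs is exactly the number of peaks $UD$ of the associated Dyck path.

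\textbf{Step 2: longest alternating subsequences.} We claim the word contains $(12)^p$ but not $(12)^{p+1}$, and contains $(21)^{p-1}$ but not $(21)^{p}$.
\begin{itemize}
\item For containment, choose one letter from each run. This gives $(12)^p$, and dropping the first run gives $2(12)^{p-1}$, which contains $(21)^{p-1}$.
\item For non-containment, use a greedy argument. Consecutive letters of an alternating subsequence differ, so they must lie in distinct runs, and the run indices strictly increase. An alternating subsequence beginning with $1$ therefore has length at most the total number of runs, $2p$. One beginning with $2$ cannot use the first run, so its length is at most $2p-1$, which rules out $(21)^p$.
\end{itemize}
This step is the only point needing care, and I expect it to be the main (mild) obstacle. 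It amounts to checking that an alternating subsequence can use at most one letter per run.

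\textbf{Step 3: the count.} By Step 2, the word avoids $(12)^j$ if and only if $p \le j-1$. It avoids $(21)^j$ if and only if $p-1 \le j-1$, which is implied by the first condition. Hence $\mathcal{L}_2^k(\O((12)^j))$ is in bijection with Dyck paths of semilength $k$ having at most $j-1$ peaks. Since $N(k,i)$ counts Dyck paths of semilength $k$ with exactly $i$ peaks, and every such path has at least one peak when $k\ge 1$, this gives $\sum_{i=1}^{j-1}N(k,i)$.

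\textbf{Stabilization.} Since $N(k,i)=0$ for $i>k$ and $\sum_{i=1}^{k} N(k,i)=C_k$, the sum equals $C_k$ once $j>k$, i.e. once $j-1\ge k$, giving $2C_k$ as claimed.
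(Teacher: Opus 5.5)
Your proposal is correct and follows essentially the same route as the paper. Both arguments reduce via Theorem~\ref{thm: symmetric} to lattice words, pass to Dyck paths, characterize avoidance of $(12)^j$ and $(21)^j$ as having at most $j-1$ peaks and at most $j-1$ valleys (the valley condition being implied by the peak condition), and sum Narayana numbers. Your run decomposition, together with the bound of one letter per run in an alternating subsequence, supplies the justification of this peak/valley characterization, which the paper states without proof.
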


\begin{proof}
    By Theorem \ref{thm: symmetric} this is equivalent to showing that $|\mathcal{L}_2^k(\O((12)^j))|$ is equal to $\sum_{i=1}^{j-1} N(k,i)$. Regular binary lattice words are easily in bijection with Dyck paths, each $1$ corresponds to a $U$ and each $2$ to a $D$. The avoidance condition is equivalent to the Dyck path having at most $j-1$ peaks and at most $j-1$ valleys. Since the number of peaks is always an upper bound for the number of valleys, it suffices to count the number of Dyck paths of semi-length $k$ with at most $j-1$ peaks. This is precisely the stated sum of Narayana numbers.
\end{proof}

In this section so far, we considered some natural generalizations of nonnesting and noncrossing permutations. We now want to consider a different symmetric set that generalizes $\O(121)$.

\begin{theorem} \label{thm: 1a21b-2a12b}
    For $n \ge 1$, $k \ge 2$ and $a,b \ge 1$,
    \[
    c_n^k(\O(1^a21^b)) = n! \; C_n^{(\min(a+b-1,k))}.
    \]
\end{theorem}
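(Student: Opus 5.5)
The plan is to reduce, via Theorem \ref{thm: symmetric}, to counting $k$-regular lattice words, and then to contract a forced block of identical letters in each row so as to land on lattice words with fewer copies of each letter. Since $\O(1^a21^b)$ is symmetric, Theorem \ref{thm: symmetric} gives $c_n^k(\O(1^a21^b)) = n!\,|\mathcal{L}_n^k(\O(1^a21^b))|$. So it suffices to show $|\mathcal{L}_n^k(\O(1^a21^b))| = C_n^{(m)}$ with $m = \min(a+b-1,k)$. Recall that $C_n^{(m)} = |\mathcal{L}_n^m|$, the number of standard Young tableaux of shape $(m^n)$.

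\textbf{The case $a+b-1 \ge k$.} Each pattern in the orbit uses $a+b \ge k+1$ copies of one letter. These cannot occur in a $k$-regular word, so the avoidance condition is vacuous and the count is $|\mathcal{L}_n^k| = C_n^{(k)}$, as in the proof of Theorem \ref{thm: 12j1-j>=k}.

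\textbf{Characterizing avoidance when $m = a+b-1 < k$.} An occurrence of $x^a y x^b$ with $x \neq y$ exists if and only if some copy of $y$ lies strictly between the $a$-th copy and the $(k-b+1)$-th copy of $x$. This uses the $a$ earliest and $b$ latest copies of $x$, and $a \le k-b$ since $m<k$. Both relative orders of $x$ and $y$ are forbidden, because the orbit contains $1^a21^b$ and $2^a12^b$. So the lattice word avoids $\O(1^a21^b)$ if and only if, for every letter $i$, the copies $\pi_{i,a}, \pi_{i,a+1},\dots,\pi_{i,k-b+1}$ occupy consecutive positions. That is, each letter has a contiguous block of $k-m+1$ copies, running from its $a$-th through its $(k-b+1)$-th copy. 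This step is a direct check in the style of Lemma \ref{lem: 12j1-rules}, and it holds for any $\sigma_\pi$, not only the identity.

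\textbf{The bijection.} Contract each such block to a single letter. This yields an $m$-regular word, since $k-(k-m+1)+1 = m$. The inverse map expands the $a$-th copy of each letter of an $m$-regular word into $k-m+1$ consecutive copies; this visibly produces words in which each block is contiguous, hence words avoiding the orbit. The two maps are clearly mutually inverse on words, so what remains is to check that both preserve the lattice property. This is the step I expect to need care. Every prefix of the contracted word corresponds to a prefix of the original word that does not end strictly inside a block. In such a prefix, the count of each letter $i$ is either at most $a-1$, or at least $k-b+1$ and then reduced by $k-m$ under contraction. Compare letters $i$ and $i+1$.
\begin{itemize}
\item If both counts are reduced, or neither is, the inequality $\#i \ge \#(i+1)$ is inherited directly.
\item If only $i$'s count is reduced, then the contracted count of $i$ is at least $(k-b+1)-(k-m) = a$. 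Meanwhile $\#(i+1) \le a-1$, so the inequality holds.
\item It is impossible that only $i+1$'s count is reduced, since that would require $\#(i+1) \ge k-b+1 > a-1 \ge \#i$, violating the original lattice condition.
\end{itemize}
The symmetric argument, working through prefixes of the expanded word, shows that expansion preserves the lattice condition. This gives a bijection $\mathcal{L}_n^k(\O(1^a21^b)) \to \mathcal{L}_n^{m}$, and therefore the count $C_n^{(m)}$, which completes the argument.
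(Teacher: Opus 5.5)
Your proposal is correct and is essentially the paper's proof. Both reduce via Theorem~\ref{thm: symmetric} to lattice words, show that avoiding $\O(1^a21^b)$ is equivalent to the $a$-th through $(k-b+1)$-th copies of each letter being contiguous (the paper's intervals $B_r$), and contract or expand these blocks to get a bijection with $\mathcal{L}_n^{a+b-1}$.

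The only difference is language. The paper deletes columns $a+1,\dots,k-b+1$ of the tableau, which makes the lattice property of the contraction automatic, and only asserts that the inverse has increasing columns. Your prefix-count check makes both directions explicit. For the expansion, note also that a prefix ending strictly inside a block satisfies the lattice condition, since it lies between the prefixes ending just before and just after that block.
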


\begin{proof}
    For ease of notation, let $\Lambda = \O(1^a21^b) = \{1^a21^b,\, 2^a12^b\}$.  Since $\Lambda$ is symmetric, by Theorem \ref{thm: symmetric} it suffices to show that $|\mathcal{L}_n^k(\Lambda)| = C_n^{(\min(a+b-1,k))}$. If $k < a+b$, neither pattern can occur, since each uses $a+b$ copies of one of its two letters while only $k$ are available.  Hence $|\mathcal{L}_n^k(\Lambda)| = |\mathcal{L}_n^k| = C_n^{(k)}$.
 
    Now suppose $k \ge a+b$ and set $m = k-a-b+2 \ge 2$.  For each row $r$ let
    \[
    B_r = \{\pi_{r,a}, \pi_{r,a+1},\dots,\pi_{r,k-b+1}\},
    \]
    where each $B_r$ contains $m$ entries.  We claim that $\pi$ avoids $\Lambda$ if and only if, for every $r$, no entry of a row other than $r$ lies strictly between $\pi_{r,a}$ and $\pi_{r,k-b+1}$.
 
    An occurrence of $1^a21^b$ consists of $a$ copies of a letter $x$, then one copy of a letter $y > x$, then $b$ further copies of $x$. Let $i$ and $i' > i$ be the rows of $x$ and $y$ respectively.  Such an occurrence exists precisely when some entry $p$ of row $i'$ has at least $a$ entries of row $i$ before it and at least $b$ after it, that is, when $\pi_{i,a} < p < \pi_{i,k-b+1}$. Notice here that the first $a$ and the last $b$ copies of $x$ are disjoint because $a+b \le k$.  This means $1^a21^b$ occurs precisely when some entry of a row $i' > i$ lies strictly between $\pi_{i,a}$ and $\pi_{i,k-b+1}$.  The same argument applied to $2^a12^b$ shows that it occurs precisely when some entry of a row $i < i'$ lies strictly between $\pi_{i',a}$ and $\pi_{i',k-b+1}$.

    Taken together, these conditions force $B_r$ to consist of $m$ consecutive integers. Indeed, if not there would be some $x<y<z$ with $x,z \in B_r$, but $y \not\in B_r$. By the above paragraph, we cannot have $y$ appear in a row less than $r$ without creating a $2^a12^b$ pattern and $y$ cannot appear in a row larger than $r$ without creating a $1^a21^b$ pattern. $y$ also cannot appear in row $r$ outside of $B_r$ because $y < z$ and entries in rows increase. This means the $B_r$ are pairwise disjoint intervals of length $m$ and since entries increase down columns we must have $B_1 < B_2 < \cdots < B_n$.
 
    Now suppose $\pi$ avoids $\Lambda$.  Delete the entries $\pi_{r,a+1},\dots,\pi_{r,k-b+1}$ from each row. That is, delete all but the smallest element of $B_r$, and standardize the remaining $kn - n(m-1) = n(a+b-1)$ entries.  Deleting entries and standardizing preserves relative order, so the result is a standard Young tableau of shape $\big((a+b-1)^n\big)$.
 
    Conversely, let $S$ be a standard Young tableau of shape $\big((a+b-1)^n\big)$ and expand each entry in column $a$ of $S$ into $m$ consecutive integers, shifting every larger entry up by $m-1$ for each such expansion.  The resulting filling of $(k^n)$ has increasing rows by construction.  It is easy to see that its columns increase as well. The two constructions are mutually inverse, so
    \[
    |\mathcal{L}_n^k(\Lambda)| = C_n^{(a+b-1)} = C_n^{(\min(a+b-1,\,k))}. \qedhere
    \]
\end{proof}

This result only relied on the sum $a+b$, so we can immediately conclude,

\begin{corollary}
    If $a+b = a'+b'$, then $\O(1^a21^b)$ and $\O(1^{a'}21^{b'})$ are canon-Wilf equivalent.
\end{corollary}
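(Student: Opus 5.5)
The corollary follows directly from Theorem \ref{thm: 1a21b-2a12b}. Suppose $a+b = a'+b'$ with $a,b,a',b' \ge 1$, and fix $k \ge 2$ and $n \ge 1$. The theorem gives
\[
c_n^k(\O(1^a21^b)) = n!\, C_n^{(\min(a+b-1,k))} = n!\, C_n^{(\min(a'+b'-1,k))} = c_n^k(\O(1^{a'}21^{b'})).
\]
The right-hand side of the theorem depends on $a$ and $b$ only through the sum $a+b$. Since this equality holds for every $n \ge 1$ and every $k \ge 2$, Definition \ref{def: canon-wilf} says the two sets are canon-Wilf equivalent.

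The only thing to check is that the hypotheses of Theorem \ref{thm: 1a21b-2a12b} cover every case, including the degenerate one $k < a+b$. They do. In that case neither pattern fits in a $k$-regular word, both counts equal $n!\,C_n^{(k)}$, and this is consistent with $\min(a+b-1,k) = k$.

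There is no real obstacle here. The substantive work, namely the collapse of the blocks $B_r$ and the bijection with standard tableaux of shape $((a+b-1)^n)$, was already done in the theorem, and it involved $a$ and $b$ only through $m = k-a-b+2$ and $a+b-1$.

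One could also ask for a direct bijection between $\mathcal{L}_n^k(\O(1^a21^b))$ and $\mathcal{L}_n^k(\O(1^{a'}21^{b'}))$. The natural candidate composes the contraction map of the theorem for $(a,b)$ with the inverse expansion map for $(a',b')$: expand column $a'$ instead of column $a$ of the intermediate tableau of shape $((a+b-1)^n)$. This remark is not needed for the corollary.
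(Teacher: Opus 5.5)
Your proposal is correct and is the same argument as the paper's. The paper also reads the corollary off Theorem~\ref{thm: 1a21b-2a12b}, since the count $n!\,C_n^{(\min(a+b-1,k))}$ depends on $a,b$ only through $a+b$ for every $n\ge 1$ and $k\ge 2$, including the case $k<a+b$. Your optional composite bijection, which contracts at column $a$ and re-expands at column $a'$, also works, but it is not needed.
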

 
\begin{remark}
    Theorem \ref{thm: 1a21b-2a12b} recovers an earlier result.  Taking $a = b = 1$ gives $\O(121) = \{121,212\}$ and $C_n^{(1)} = 1$, recovering Proposition \ref{prop: 212}.  Taking $a = 1$ and $b=j$ gives $c_n^k(\O(121^j)) = n! C_n^{(\min(j,k))}$. We can also conclude that $c_n^k(\Lambda) = c_n^{k+1}(\Lambda)$ once $k \ge a+b-1$.
\end{remark}

These are only a few examples of symmetric avoidance classes, it would be interesting to find more. We discuss this more in Section \ref{sec: further-study}. The proof of Theorem \ref{thm: 1a21b-2a12b} actually gives us a result about the descent polynomials of this avoidance class. We define and discuss this in the following subsection.

\subsection{Descent Polynomials} \label{subsec: descents}
Recall that $j$ is a \emph{descent} of a word $\pi = \pi_1\cdots\pi_N$ if $\pi_j > \pi_{j+1}$, and let $\des(\pi)$ denote the number of descents of $\pi$.  Write
\[
D_n^k(x) := \sum_{\pi \in \C_n^k} x^{\des(\pi)}
\]
for the descent polynomial of all canon permutations.  Elizalde \cite{E23,E25} proved that $D_n^k(x)$ has a product structure, and in particular that $D_n^2(x) = A_n(x)N_n(x)$, where $A_n$ is an Eulerian polynomial and $N_n$ a Narayana polynomial. We note that our notation differs from Elizalde's, we use $D_n^k(x)$ instead of $C_n^k(x)$ to avoid confusion with our other notation. 

Beck and Deligeorgaki \cite{BD24} recovered and generalized this from Stanley's theory of $(P,\omega)$-partitions \cite{Sta11,Sta72}, showing that $D_n^k(x) = A_n(x)\,h^*_{[k]\times[n]}(x)$, where $h^*_{[k]\times[n]}(x)$ is the numerator of the generating function of the order polynomial of $[k] \times [n]$, see \cite{BD24} for details. This implies that $D_n^k(x)$ is palindromic and $\gamma$-positive. Here palindromic means the coefficient sequence is symmetric, and $\gamma$-positive is a stronger condition on the distribution of the coefficients: a palindromic polynomial is $\gamma$-positive if its coefficients are non-negative when expressed in the $\gamma$-basis $\{x^i(x+1)^{d-2i} \; | \; 0 \le i \le \lfloor d/2 \rfloor\}$, where $d$ is the degree.  The bijection of Theorem \ref{thm: 1a21b-2a12b} turns out to respect descents, so these properties are inherited by the avoidance classes $\C_n^k(\O(1^a21^b))$. This means,
 
\begin{theorem} \label{thm: 1a21b-descents}
    Let $n \ge 1$, $k \ge 2$ and $a,b \ge 1$, and set $\ell = \min(a+b-1,\,k)$.  Then
    \[
    \sum_{\pi \in \C_n^k(\O(1^a21^b))} x^{\des(\pi)} = D_n^{\ell}(x).
    \]
    In particular this descent polynomial is palindromic and $\gamma$-positive.
\end{theorem}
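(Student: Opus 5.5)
The plan is to show that the bijection in the proof of Theorem \ref{thm: 1a21b-2a12b}, applied to the whole canon permutation rather than only to its lattice word, preserves descents. The palindromicity and $\gamma$-positivity then come for free from the result of Beck and Deligeorgaki \cite{BD24} for $D_n^{\ell}(x)$.

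First I dispose of the case $k < a+b$. Here $\ell = k$, and by the first paragraph of the proof of Theorem \ref{thm: 1a21b-2a12b} neither pattern can occur, so $\C_n^k(\O(1^a21^b)) = \C_n^k$. The left-hand side is therefore literally $D_n^k(x) = D_n^{\ell}(x)$.

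Now assume $k \ge a+b$, so $\ell = a+b-1$ and $m = k-a-b+2 \ge 2$. I define $\Phi : \C_n^k(\O(1^a21^b)) \to \C_n^{\ell}$ by
\[
\Phi(\sigma_\pi, T_\pi) = (\sigma_\pi, S),
\]
where $S$ is obtained from $T_\pi$ by deleting $\pi_{r,a+1},\dots,\pi_{r,k-b+1}$ in every row and standardizing. This is well defined and bijective for the following reasons.

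\begin{itemize}
\item Avoidance of a symmetric set does not depend on $\sigma_\pi$. So $\pi=(\sigma,T)$ avoids $\O(1^a21^b)$ if and only if the lattice word of $T$ does, as in Theorem \ref{thm: symmetric}.
\item The proof of Theorem \ref{thm: 1a21b-2a12b} gives a bijection $T \mapsto S$ from admissible tableaux of shape $(k^n)$ to all standard Young tableaux of shape $(\ell^n)$.
\item Pairing this tableau bijection with the identity on $\sigma \in \S_n$ gives a bijection onto all of $\C_n^{\ell}$.
\end{itemize}

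The key step is to describe $\Phi$ directly on one-line words. In $\pi$, the positions in $B_r = \{\pi_{r,a},\dots,\pi_{r,k-b+1}\}$ are $m$ consecutive integers, as shown in that proof, and all of them carry the same letter $(\sigma_\pi)_r$. Deleting all but the smallest element of each $B_r$ and standardizing positions preserves the relative order of the surviving positions. Hence the one-line word of $\Phi(\pi)$ is obtained from $\pi$ by collapsing each block $B_r$, a run of $m$ equal adjacent letters, to a single copy of that letter.

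A descent at position $p$ compares the adjacent letters $\pi_p,\pi_{p+1}$. Inside a run of equal letters there are no descents. At the boundary of a run, the comparison is between the same letter value and the same neighbor after collapsing. So collapsing runs of equal letters preserves the number of descents, and $\des(\Phi(\pi)) = \des(\pi)$. Summing over the bijection gives
\[
\sum_{\pi \in \C_n^k(\O(1^a21^b))} x^{\des(\pi)} = \sum_{\pi' \in \C_n^{\ell}} x^{\des(\pi')} = D_n^{\ell}(x).
\]

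Finally, by \cite{BD24} we have $D_n^{\ell}(x) = A_n(x)\,h^*_{[\ell]\times[n]}(x)$, which is palindromic and $\gamma$-positive. This gives the last sentence of the statement.

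The main obstacle is the run-collapsing step: I must check carefully that each $B_r$ really occupies consecutive positions of the word, which was established in the proof of Theorem \ref{thm: 1a21b-2a12b}. I must also check that the standardization of the remaining entries coincides with simply contracting those runs in the one-line word; I expect this to be a direct bookkeeping check.
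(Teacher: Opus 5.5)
Your proposal is correct and follows the paper's proof essentially verbatim. You dispose of $k<a+b$ trivially, then use the bijection $(\sigma_\pi,T_\pi)\mapsto(\sigma_\pi,S)$ from Theorem \ref{thm: 1a21b-2a12b}, and observe that because each $B_r$ occupies consecutive positions carrying the same letter, the map only collapses runs of equal adjacent letters and so preserves descents. The bookkeeping you flag is immediate, since the standardized word is exactly the subword of $\pi$ on the surviving positions, and palindromicity and $\gamma$-positivity come from \cite{BD24} just as the paper intends.
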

 
\begin{proof}
    If $k < a+b$ then $\ell = k$ and, as in the first paragraph of the proof of Theorem \ref{thm: 1a21b-2a12b}, neither pattern occurs in a $k$-regular word, so $\C_n^k(\O(1^a21^b)) = \C_n^k$ and there is nothing to prove.
 
    Suppose $k \ge a+b$ and set $m = k-a-b+2$ as before.  The proof of Theorem \ref{thm: 1a21b-2a12b} gives a bijection 
    \[
    \pi = (\sigma_\pi, T_\pi) \mapsto \pi' = (\sigma_\pi, S)
    \]
    from $\C_n^k(\O(1^a21^b))$ to $\C_n^{a+b-1}$, where $S$ is obtained from $T_\pi$ by deleting the entries $\pi_{r,a+1},\dots,\pi_{r,k-b+1}$ of each row $r$ and standardizing.  We claim it preserves descents.
 
    Fix a row $r$.  By Theorem \ref{thm: 1a21b-2a12b}, the block $B_r = \{\pi_{r,a},\dots,\pi_{r,k-b+1}\}$ consists of $m$ consecutive integers. This means in the word $\pi$ these are $m$ consecutive occurrences of $(\sigma_\pi)_r$.  Passing from $\pi$ to $\pi'$ deletes all but the first of these $m$ occurrences.  This does not create or remove a descent because we are removing consecutive repetitions of an element. This immediately implies that $\des(\pi') = \des(\pi)$.  Applying this to each row shows that
    \[
    \sum_{\pi \in \C_n^k(\O(1^a21^b))} x^{\des(\pi)} = \sum_{\pi' \in \C_n^{a+b-1}} x^{\des(\pi')} = D_n^{a+b-1}(x). \qedhere
    \]
\end{proof}
 
\begin{remark}
    Theorem \ref{thm: 1a21b-descents} says that $\C_n^k(\O(1^a21^b))$ is not just equinumerous with the smaller class $\C_n^{\ell}$, but equidistributed with it by descents.  Combined with Theorem \ref{thm: 1a21b-2a12b} this recovers $c_n^k(\O(1^a21^b)) = D_n^{\ell}(1) = n!\,C_n^{(\ell)}$. At the two smallest values of $\ell$, this specializes to familiar polynomials. When $\ell = 1$, which by Proposition \ref{prop: 212} is the case $a = b = 1$, we get $D_n^1(x) = A_n(x)$, the Eulerian polynomial, and when $\ell = 2$, we get $A_n(x)N_n(x)$.
\end{remark}

\section{Further Directions} \label{sec: further-study}

The results in each of our sections generally rest on one (or more) of three hypotheses: that the underlying shape of the tableau is the rectangle $(k^n)$, that the forbidden set $\Lambda$ is symmetric, and that $n$ is the parameter that varies. Relaxing each suggests a different direction for future work.

\subsection{Changing the tableau shape}

The framework of Section \ref{sec: background} generalizes immediately if we allow $T_\pi$ to have a shape other than $(k^n)$. This amounts to dropping the $k$-regularity assumption and would mean we no longer have a canon permutation. Some natural options for tableau shapes are the elongated staircase $(k,k-1,k-2,\dots,1^{n-k})$, or a skew shape $(k^n) \setminus (j^m)$, where the canon permutations are precisely the case $j = m = 0$. In the language of Remark \ref{rem: poset-contrast}, this changes the poset $[k] \times [n]$ itself, whereas the avoidance conditions of Section \ref{sec: symmetry} add relations and the dissonant canon permutations of \cite{BD24} delete them. Avoidance in lattice words was studied for $n = 2$ \cite{PS26}, but already for $n \ge 3$, and for shapes other than the rectangle, little seems to be known.

\subsection{Changing the patterns}

Section \ref{sec: symmetry} focused on avoiding symmetric sets, allowing us to ignore $\sigma_\pi$ and just consider avoidance in lattice words. This was motivated by approachability and the fact that the noncrossing and nonnesting avoidance sets are symmetric. Since every orbit $\O(\lambda)$ is symmetric, there are many more to consider, and several appear experimentally to have clean enumerations. For example,

\begin{conjecture} \label{conj: 1213}
    For $n \ge 2$ and $k \ge 2$,
    \[
    c_n^k(\O(1213)) = n! \; C_k,
    \]
    where $C_k$ is the $k$-th Catalan number.  In particular the lattice word enumeration does not depend on $n$.
\end{conjecture}

Recall $\O(1213) = \{1213,1312,2123,2321,3132,3231\}$. Since $\O(1213)^r = \O(1232)$, the same formula should hold for $\O(1232)$. Another example with a nice enumeration is

\begin{conjecture} \label{conj: 12112}
    For $n \ge 1$ and $k \ge 2$,
    \[
    c_n^k(\O(12112)) = n! \; \C_{k,n}.
    \]
\end{conjecture}

It is interesting that the Fuss-Catalan numbers arise here as well. Theorem \ref{thm: 112} produces them from a single non-symmetric pattern of length three, while Conjecture \ref{conj: 12112} produces them from a symmetric pair of patterns of length five. By Remark \ref{rem: orbits}, $\O(12212) = \O(12112)^r$ satisfies the same conjecture. One other interesting symmetric set is $\O(1231)$.

\begin{conjecture} \label{conj: 1231}
    Let $a_n = |\mathcal{L}_n^k(\O(1231))|$. Then $a_1 = 1$, $a_2 = C_k$, and for $n \ge 3$ and $k \ge 1$,
    \[
    a_n = a_{n-1} + (C_k - 1) \, a_{n-2}.
    \]
\end{conjecture}

Here $\O(1231) = \{1231,1321,2132,2312,3123,3213\}$. The value $a_2 = C_k$ is forced, since a pattern on three letters cannot occur in a word on two letters. Three special cases of this conjecture are worth mentioning: 

\begin{itemize}
    \item When $k = 2$ we have $C_k - 1 = 1$. So our conjecture becomes $c_n^2(\O(1231)) = n! \, F_{n+1}$, where $F_n$ is the $n$-th Fibonacci number.
    \item When $k = 3$, the recurrence $a_n = a_{n-1} + 4a_{n-2}$ is \cite[A006131]{OEIS}. 
    \item If we take $n = 3$, we get $|\mathcal{L}_3^k(\O(1231))| = 2C_k -1$ for every $k \ge 1$.
\end{itemize}  

Beyond finding more such sets, it would be interesting to understand which structural properties survive adding avoidance conditions. Subsection \ref{subsec: descents} suggests one question in this direction,

\begin{question} \label{Q: palindromic-sym}
    For which symmetric sets $\Lambda$ is the descent polynomial of $\C_n^k(\Lambda)$ palindromic, or $\gamma$-positive?  By Theorem \ref{thm: symmetric} such a class is a disjoint union over $\sigma \in \S_n$ of sets of linear extensions of a labeled poset obtained from $[k]\times[n]$ by adjoining relations that the avoidance conditions impose. This means the machinery of \cite{BD24} applies whenever the resulting poset is again a product.  Theorem \ref{thm: 1a21b-descents} is one example of when this happens.
\end{question}

\subsection{Changing which parameter varies}

Conjecture \ref{conj: 1213} is constant in $n$, and the last special case of Conjecture \ref{conj: 1231} is a clean formula in $k$ for fixed $n = 3$. Both suggest fixing $n$ and letting $k$ vary, which is the opposite of the convention thus far. Most work to date, including the still open enumeration of $c_n^2(321) = c_n^2(123)$, fixes $k = 2$ and varies $n$. From this perspective even the non-symmetric sets look tractable.

\begin{question}
    What is $c_3^k(\Lambda)$ for $\Lambda \subseteq \S_3$?
\end{question}

For example, it seems like

\begin{conjecture}
    For $k \geq 1$, $c_3^k(213,312) = 4 C_k$ where $C_k$ is the $k$-th Catalan number.
\end{conjecture}

\noindent {\tiny \emph{The views expressed in this paper are those of the authors and do not reflect the official policy or position of the U.S. Naval Academy, Department of the Navy, the Department of Defense, or the U.S. Government.} \par}

\end{document}